\documentclass[10pt,a4paper]{article}
\usepackage{amsthm,amssymb,mathrsfs,setspace,amsmath,mathtools,bm,fixmath}
\usepackage[a4paper,bindingoffset=0.2in,
left=0.5in,right=0.5in,
top=1in,bottom=1in,footskip=.25in]{geometry}
\usepackage{color,soul}
\usepackage{enumitem}
\title{Multidimensional Fractional Wavelet Transforms and Uncertainty Principles}
\author{Navneet Kaur$^a$, Bivek Gupta$^b$\thanks{bivekgupta040792@gmail.com}, Amit K. Verma$^c$\thanks{Corresponding author email: akverma@iitp.ac.in}, \\{\small\textit{$^{a,b,c}$Department of Mathematics, IIT Patna, Bihta, Patna, 801106.}}}

\theoremstyle{definition}
\newtheorem{definition}{Definition}[section]
\newtheorem{lemma}{Lemma}[section]

\newtheorem{theorem}{Theorem}[section]
\newtheorem{corollary}{Corollary}[section]

\newcommand{\F}{F}
\newcommand{\p}{\theta}
\newcommand{\q}{p}
\newcommand{\N}{N}
\newcommand{\A}{S}
\begin{document}
\maketitle
\begin{abstract}
In this paper, we have given a new definition of continuous fractional wavelet transform in $\mathbb{R}^{\N},$ namely the multidimensional fractional wavelet transform (MFrWT) and studied some of the basic properties along with the inner product relation and the reconstruction formula. We have also shown that the range of the proposed transform is a reproducing kernel Hilbert space and obtain the associated kernel. We have obtained the uncertainty principle like Heisenberg's uncertainty principle, logarithmic uncertainty principle and local uncertainty principle of the multidimensional fractional Fourier transform (MFrFT). Based on these uncertainty principles of the MFrFT we have obtained the corresponding uncertainty principles i.e., Heisenberg's, logarithmic and local uncertainty principles for the proposed MFrWT.
\end{abstract}
{\textit{Keywords}:} Multidimensional Fractional Fourier Transform; Multidimensional Fractional Wavelet Transform; Heisenberg's Uncertainty Principle; Logarithmic Uncertainty Principle; Local Uncertainty Principle\\
{\textit{AMS Subject Classification 2020}:} 42C40, 46E30, 47G10
\section{Introduction}
In 1980 Namias introduced the fractional Fourier transform (FrFT), also known as essentially equivalent transforms, as a generalization of the traditional Fourier transform (FT)(\cite{namias1980fractional}) . This theory was later refined and studied in \cite{almeida1994fractional}, \cite{mcbride1987namias}. FrFT has been used as a substantial tool for analysis of the signal, sensor data, transmission signals such as radio signals and many others, for example \cite{almeida1994fractional},\cite{bultheel2002shattered},\cite{kutay1997optimal},\cite{shi2012uncertainty},\cite{ozaktas1999introduction}. Because of the scarcity of localization information, It is not suitable for processing signals with varying fractional frequencies over time. The Fourier transform, on the other hand, has been found to be unsuitable for characterizing some practical uses or dealing with their inherent mathematical issues. As a repercussion, some Fourier transforms have been developed to compensate for the FT's imprecision, such as the FrFT, wavelet transform (\cite{debnath2002wavelet}) and the windowed Fourier transform (\cite{grochenig2001foundations}). Mendlovich et al. (\cite{mendlovic1997fractional}) devised the Fractional Wavelet Transform to analyze with optical signals in 1997. The proposal was to use the wavelet transform of the signal's fractional spectrum, which was obtained using FrFT. Since then, FrFT has produced fractional frequencies that last for the entire duration of the signal rather than a specific time, preventing the signal from communicating local information. As a result, the fractional wavelet transformation investigated in  \cite{mendlovic1997fractional} fails to capture the signal's local properties. As a result, Shi et al. (\cite{shi2012novel}) proposed a new fractional wavelet transform that combines aspects of both the classical and FrFT wavelet transforms and returns the signal's local information. In \cite{dai2017new}, Dai et al. studied a new FrWT, which is more general than the transform studied in \cite{shi2012novel}, \cite{prasad2014generalized}. They also studied the multiresolution analysis associated with it. Luchko et al. (\cite{luchko2008fractional}) gave a novel definition of FrFT and the corresponding fractional wavelet transform have been studied in \cite{srivastava2019certain},\cite{verma2021certain}. For more information on the fractional Fourier transform introduced in \cite{luchko2008fractional}, we refer the reader to \cite{kilbas2010fractional},\cite{srivastava2017family}.

In recent times, Zayed \cite{zayed2018two} proposed 2-dimensional FrFT $$\mathfrak{F}_{\alpha,\beta}(u,v)=\displaystyle \int_{
\mathbb{R}^2}k(x,y,u,v;\alpha,\beta)f(x,y)dxdy,$$ where $k(x,y,u,v;\alpha,\beta)$ is given by equation $(3.13)$ in \cite{zayed2018two}, which is not a tensor product of two 1-dimensional FrFT (\cite{almeida1994fractional}). He verified its properties with  the convolution theorem, the inverse theorem, and the Poisson summation formula. Kamalakkanan et al. \cite{kamalakkannan2020multidimensional} proved the inverse formula for MFRFT (\cite{ozaktas2001fractional})  which is defined as the tensor product of {\N}-copies of a 1-dimensional FrFT. He also studied the related convolution theorem and product theorem, as well as presenting a generalised fractional convolution that was more general than the one in \cite{i1998fractional}.  Verma et al. (\cite{verma2021note}) extended the continuous fractional wavelet transforms in $\mathbb{R}^{\N}$ with the dilation parameter in $\mathbb{R}^{\N}$ and studied the associated uncertainty principles along with its boundedness on Morrey space. As per taking the context on multidimensional fractional Fourier transform and one-dimensional fractional wavelet transforms, in this paper, we provide definition of multidimensional fractional wavelet transforms with ${\N}-$dimensional parameter $\bm{\alpha}$ in a more precise way, including properties of continuous fractional wavelet transforms.

The signal's frequency and time at any point in the time-frequency plane are unknown. To put it another way, we have no way of knowing which spectral components are present at any given time. All we can do is look at which spectrum components are present at any given time frame. This issue is called the uncertainty principle. Heisenberg discovered and formulated the uncertainty principle, which states that a moving particle's momentum and state cannot be determined simultaneously. The principles of logarithmic, Heisenberg, and local uncertainty help us to perceive the interrelationships between different transformed domains better than the unrelated appearance \cite{dai2017new}, \cite{yang2013mathematical}. Because they are signal processing elements, the three uncertainty principles are well suited for potential 
later applications (\cite{guanlei2009logarithmic}). The detailed description and history of these three inequalities are given in \cite{folland1997uncertainty}.

The purpose of this paper is to define novel multidimensional fractional wavelet transform (MFrWT) with parameter $(\alpha_1,\alpha_2,\cdots,\alpha_{\N})$ that is broader in scope than the transforms defined earlier. We give some basic properties of the suggested transform and obtain the inner product relation, reconstruction/retransformation formula and also characterize its range. We derive the Heisenberg's uncertainty inequality, local uncertainty inequality and  logarithmic uncertainty inequality for the MFrFT. Based on the properties of the MFrWT and the uncertainty inequalities associated with the MFrFT we derive the same for the MFrWT.
The rest of the paper is organized as follows. In section 2, we review some fundamental definitions. In section 3, The MFrWT's theoretical framework, including its definition, properties, and inverse transformation, has been established. In addition, we defined the range of transformations and demonstrated that the range is the reproducing  kernel  Hilbert space. In section 4, we have obtained several uncertainty principles for the MFrWT. Lastly, in section 5, we conclude our paper.
   
\section{Preliminaries}
Let $\|\cdot\|$ denote the Euclidean norm and $\mathbb{R}^{\N}$ be the $\N$-dimensional Euclidean space that is, for $\mathbf{x} = (x_1, x_2,\cdots,x_{\N})\in \mathbb{R}^{\N}$
\begin{align*}
\|\mathbf{x}\|=\sqrt{\sum_{i=1}^{\N}x_{i}^2}.
\end{align*}
We specify $|\mathbf{x}|_m=|x_1x_2x_3\cdots x_{\N}|$ and $\mathbb{R}_0^{\N}=\{\mathbf{x}\in\mathbb{R}^{\N}:|\mathbf{x}|_m\neq0\}$. For $\mathbf{x}=(x_1,x_2,\cdots,x_{\N}),~\mathbf{y} = (y_1, y_2,\cdots,y_{\N})\in \mathbb{R}^{\N},~\mathbf{x}+\mathbf{y}=(x_1+y_1,x_2+y_2,\cdots,x_{\N}+y_{\N}),~\textbf{xy}=(x_1y_1,x_2y_2,\cdots,x_{\N}y_{\N}).$ Moreover $y\in \mathbb{R}_0^{\N}$ then, $ \frac{\mathbf{x}}{\mathbf{y}}=\left(\frac{x_1}{y_1},\frac{x_2}{y_2},\cdots,\frac{x_{\N}}{y_{\N}}\right)$ and if $\bm\alpha=(\alpha_1,\alpha_2,\cdots,\alpha_{\N})$ then we define $\sin\bm\alpha=(\sin\alpha_1,\sin\alpha_2,\cdots,\sin\alpha_{\N}).$
\begin{definition}  For $1\leq P < \infty,$ the Lebesgue space $L^P(\mathbb{R}^{\N})$ is a Banach space and for every complex valued measurable function  $f \in \mathbb{R}^{\N}$ such that
\begin{eqnarray*}
\int_{\mathbb{R}^{\N}}|f(\bm{t})|^P d\bm{t}< \infty,
\end{eqnarray*}
where norm is given by
\begin{eqnarray*}
\|f\|_{L^P(\mathbb{R}^{\N})}=\left(\int_{\mathbb{R}^{\N}}|f(\bm{t})|^P d\bm{t}\right)^{\frac{1}{P}}.
\end{eqnarray*}
In particular, $L^2(\mathbb{R}^{\N})$ is a Hilbert space in which the  inner product is stated by
\begin{eqnarray*}
\langle f,g\rangle _{L^2(\mathbb{R}^{\N})}=\int_{\mathbb{R}^{\N}}f(\bm{t})\overline{g(\bm{t})}d\bm{t},
\end{eqnarray*}
here $\overline{g(\bm{t})}$ denotes the complex conjugate of $g(\bm{t}).$
\end{definition}
\begin{definition}
Assume $X$ is a complete inner product space of complex-valued functions defined on $\A,$ where $\A$ is an arbitrary set with the inner product defined by $\langle.,.\rangle _X$. Then a complex-valued function $\mathcal{K}$ defined on $\A \times \A$ is known as reproducing kernel of $X$ if it fulfill the necessary conditions,

for a particular $\q \in \A$, we've $\mathcal{K}(\cdot, \q)$also in  $X$ and $f(\q)=\langle f(\cdot), \mathcal{K}(\cdot,\q)\rangle_X$ for every $f \in X.$
\end{definition}
We'll go through the definition of the MFrFT (\cite{kamalakkannan2020multidimensional}) in the next section. 
\begin{definition}
The MFrFT of $f\in L^2(\mathbb{R}^{\N}),$ of order $\bm\alpha=(\alpha_1, \alpha_2,\cdots,\alpha_{\N}),$ ${\alpha_i} \in \left(-\pi,\pi\right)\backslash\{0\},$ for $i= 1,2,\cdots, {\N}$ and $\lambda\in \mathbb{R}- \{0\},$ is given by
\begin{eqnarray}\label{P1eqn1}
\mathfrak{F}_{\bm{\alpha},\lambda}\left(f\right)\left(\bm{\xi}\right)=F_{\bm{\alpha},\lambda}\left(\bm{\xi}\right)= \int_{\mathbb{R}^{\N}} f\left(\mathbf{x}\right)K_{\bm{\alpha},\lambda}\left(\mathbf{x},\bm{\xi}\right) \,d\mathbf{x},
\end{eqnarray}
where $ K_{{\bm{\alpha}},\lambda}\left(\bm{x},\bm{\xi}\right)= \displaystyle \prod_{i=1}^{\N}K_{\alpha_{k},\lambda}\left(x_{k},\xi_{k}\right)$ and $K_{\alpha_{k},\lambda}\left(x_{k},\xi_{k}\right),~\mbox{for}~k=1,2,\cdots,{\N},$ are defined by
\begin{eqnarray*}
K_{\alpha_{k},\lambda}\left(x_{k},\xi_{k}\right)=\begin{cases}
\frac{c(\alpha_{k})}{(\sqrt{2\pi})}e^{i\lambda^2\{a(\alpha_{k})[{x_{k}}^2+{\xi_{k}}^2-2b(\alpha_{k})x_{k}\xi_{k}]\}},  \alpha_{k}\notin \pi\mathbb{Z}\\
\Delta(x_{k}-\xi_{k}),\quad \quad \quad \quad\quad\quad\quad\quad\quad\quad \alpha_{k}\in 2\pi \mathbb{Z}\\
\Delta(x_{k}+\xi_{k}),\quad \quad\quad\quad\quad\quad\quad\quad\quad\quad  \alpha_{k}\in 2\pi \mathbb{Z}+\pi.
\end{cases}
\end{eqnarray*}
Here, $\mathbf{x}=(x_{1},x_{2}, x_{3},...,x_{\N}),\quad a(\alpha_{k})=\frac{\cot(\alpha_{k})}{2}, \quad b(\alpha_{k})=\sec(\alpha_{k}),\quad c(\alpha_{k})=\sqrt{1-i\cot\alpha_{k}}.$

The kernel $K_{{\bm{\alpha}},\lambda}\left(\mathbf{x},\bm{\xi}\right)$ can be re-written as
\begin{eqnarray*}
K_{\bm{\alpha},\lambda}\left(\mathbf{x},\bm{\eta}\right)=\frac{c(\bm{\alpha}_\lambda)}{(\sqrt{2\pi})^{\N}}e_{\bm{\alpha},\lambda^2}(\mathbf{x})e_{\bm{\alpha},\lambda^2}(\bm{\eta})e^{-i\lambda^2\sum_{k=1}^{\N}x_{k}\eta_{k}\csc\alpha_{k}},
\end{eqnarray*}
where
\begin{eqnarray}
e_{\bm{\alpha},\lambda^2}(x)&=&e^{i\lambda^2 \sum_{k=1}^{\N}a(\alpha_{k}){x_{k}}^2},\quad c(\bm\alpha_\lambda)=c(\alpha_1)c(\alpha_2) \cdots c(\alpha_{\N})
\end{eqnarray}
and
$$ \bm\alpha_\lambda = \left(\cot^{-1}(\lambda^2\cot\alpha_1),\cot^{-1}(\lambda^2\cot\alpha_2),\cdots,\cot^{-1}(\lambda^2\cot\alpha_{\N})\right).$$
\end{definition}

\section{Multidimensional Fractional Wavelet Transform (MFrWT)}\begin{definition}
A non-zero function  $\psi\in L^2(\mathbb{R}^{\N})$ is  wavelet admissible if
\begin{eqnarray}
\displaystyle\int_{\mathbb{R}^{\N}_{0}}|(\mathfrak{F}_{\bm{\alpha},\lambda}\psi)(\bm{u})|^2\frac{d\bm{u}}{|\bm{u}|_m}<\infty.
\end{eqnarray}

A wavelet admissible function is also known as a fractional wavelet or simply a wavelet.
\end{definition}
Now, we define our new MFrWT.

Let $f \in L^2(\mathbb{R}^{\N})$ and $\psi$ be an admissible wavelet, then  MFrWT is given by
\begin{eqnarray*}
(W_{\psi}^{\bm\alpha,\lambda}f)(\bm{a},\bm{b})=c(\bm\alpha_\lambda)e_{\bm{\alpha},-\lambda^2}(\tilde{f} \ast \breve{g})(\bm{b}), \quad \bm{a}\in \mathbb{R}^{\N}_0,\quad \bm{b}\in\mathbb{R}^{\N},
\end{eqnarray*}
where  $\ast$ is the convolution given by
$$(\mu\ast \nu)(\mathbf{x})=\int_{\mathbb{R}^{\N}}\mu(\mathbf{y})\nu(\mathbf{x}-\mathbf{y})d\mathbf{y},$$
$\tilde{\psi}=\psi{e}_{\bm{\alpha},\lambda^2},$ $\tilde{f}=f{e}_{\bm{\alpha},\lambda^2}$ and
${\breve{g}}(\bm{t})=\overline{\frac{1}{\sqrt{|\bm{a}|_m}}\tilde{\psi}\left(\frac{-\bm{t}}{\bm{a}}\right)}.$\\
Therefore,
\begin{eqnarray*}
(W_{\psi}^{\bm\alpha,\lambda}f)(\bm{a},\bm{b})&=&c(\bm\alpha_\lambda)e_{\bm{\alpha},-\lambda^2}(\bm{b})(\tilde{f}*{\breve{g}})(\bm{b})\\
&=&c(\bm\alpha_\lambda)e_{\bm{\alpha},-\lambda^2}(\bm{b})\int_{\mathbb{R}^{\N}}\tilde{f}(\bm{t})\overline{\frac{1}{\sqrt{|\bm{a}|_m}}\tilde{\psi}\left(\frac{\bm{t}-\bm{b}}{\bm{a}}\right)}d\bm{t}\\
&=&c(\bm\alpha_\lambda)e_{\alpha,-\lambda^2}(\bm{b})\int_{\mathbb{R}^{\N}}f(\bm{t})e_{\bm{\alpha},\lambda^2}(\bm{t})\overline{\frac{1}{\sqrt{|\bm{a}|_m}}\tilde{\psi}\left(\frac{\bm{t}-\bm{b}}{\bm{a}}\right)}d\bm{t}\\
&=&\left\langle f(\bm{t}),\overline{c(\bm\alpha_\lambda)}e_{\bm{\alpha},\lambda^2}(\bm{b})e_{\bm{\alpha},-\lambda^2}(\bm{t}){\frac{1}{\sqrt{|\bm{a}|_m}}\tilde{\psi}}\left(\frac{\bm{t}-\bm{b}}{\bm{a}}\right)\right\rangle\\
&=&\left\langle f(\bm{t}),\mathbf{\Psi}_{\bm{\alpha},\lambda,\bm{a},\bm{b}}(\bm{t})\right\rangle,
\end{eqnarray*}
where
${\mathbf{\Psi}}_{\bm{\alpha},\lambda,\bm{a},\bm{b}}(\bm{t})=\overline{c(\bm\alpha_\lambda)}e_{\bm{\alpha},\lambda^2}(\bm{b})e_{\bm{\alpha},-\lambda^2}(\bm{t}){\frac{1}{\sqrt{|\bm{a}|_m}}\tilde{\psi}}\left(\frac{\bm{t}-\bm{b}}{\bm{a}}\right).
$\\
We now prove some properties of the proposed MFrWT.
\begin{theorem}
Let $\psi,\phi\in L^2(\mathbb{R}^{\N})$ be two wavelets and for any two functions $f,g \in L^2(\mathbb{R}^{\N}).$ Also let $\sigma>0$ and $r,s\in \mathbb{C},$ then
\begin{enumerate}[label=(\roman*)]
\item {Linearity}:\label{P1PropMFrWT1}
$W^{\bm\alpha,\lambda}_{\psi}(r{f}+s{g})=r(W_{\psi}^{\bm\alpha,\lambda}{f})+s(W_{\psi}^{\bm\alpha,\lambda}{g})$.
This property shows that the MFrWT upholds the superposition principle, which is useful for multicomponent signal analysis.
\item{Anti-linearity}:\label{P1PropMFrWT2}
 $W_{\left(r{\psi}+s{\phi}\right)}^{\bm\alpha,\lambda}{f}=\overline{r}W_{{\psi}}{f}+\overline{s}W_{\phi}{f}.$
\item {Dilation}:\label{P1PropMFrWT3}
$(W_{\psi}^{\bm\alpha,\lambda}D_\sigma f)(\bm{a},\bm{b})=\displaystyle\frac{C'}{\sigma^{\N}}W_{\psi}^{\bm\alpha,\frac{\lambda}{\sigma}}f(\sigma\bm{a},\sigma\bm{b}),$ where $D_\sigma$ is a dilation operator, which is given by $D_\sigma f(\bm{t})=f(\sigma \bm{t})$ and $C'=\sigma^{\frac{\N}{2}}\overline{\left(\frac{c(\alpha_{\lambda})}{c\left(\alpha_{\frac{\lambda}{\sigma}}\right)}\right)},$
\item {Conjugacy}: \label{P1PropMFrWT4}
$(W_{\psi}^{\bm\alpha,\lambda}\overline{f})(\bm{a},\bm{b})=\overline{W_{\overline{\psi}}^{-\bm\alpha,\lambda}f(\bm{a},\bm{b})}.$
 \item {Parity}:\label{P1PropMFrWT5}
$(W_{\psi}^{\bm\alpha,\lambda}Pf)(\bm{a},\bm{b})=W_{\psi}^{\bm\alpha,\lambda}f(-\bm{a},-\bm{b}),$
Here, $P$ is the parity operator, which is defined by $Pf(\mathbf{x})=f(-\mathbf{x}).$

 \item{Translation}:\label{P1PropMFrWT6}
 $(W_\psi^{\bm\alpha,\lambda}\tau_{\mathbf{y}}f)(\bm{a},\bm{b})=e_{\bm{\alpha},-\lambda^2}(\bm{b})e_{\bm{\alpha},\lambda^2}(\bm{b}-\mathbf{y})e_{\bm{\alpha},\lambda^2}(\mathbf{y})(W_\psi^{\bm\alpha,\lambda}\check{f})(\bm{a},\bm{b}-\mathbf{y}),$ where  fractional translation operator $\tau_{\mathbf{y}}$ is given by $\tau_{\mathbf{y}}f(\mathbf{x})=f(\mathbf{x}-\mathbf{y})$ and $\displaystyle \check{f}(\bm{t})=f(\bm{t})e^{i\lambda^2\sum_{k=1}^{\N}a(\alpha_k)2{t_{k}}y_k}.$
\end{enumerate}
\end{theorem}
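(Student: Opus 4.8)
The plan is to obtain all six identities directly from the integral form
$(W_{\psi}^{\bm\alpha,\lambda}f)(\bm{a},\bm{b})=c(\bm\alpha_\lambda)\,e_{\bm\alpha,-\lambda^2}(\bm{b})\int_{\mathbb{R}^{\N}}f(\bm{t})\,e_{\bm\alpha,\lambda^2}(\bm{t})\,\overline{\tfrac{1}{\sqrt{|\bm{a}|_m}}\tilde{\psi}\!\left(\tfrac{\bm{t}-\bm{b}}{\bm{a}}\right)}\,d\bm{t}$
derived above, together with a small toolbox of elementary facts about the chirp factor $e_{\bm\alpha,\mu^2}(\bm{x})=\exp\!\big(i\mu^2\sum_{k=1}^{\N}a(\alpha_k)x_k^2\big)$: it is even in $\bm{x}$; one has $\overline{e_{\bm\alpha,\mu^2}}=e_{\bm\alpha,-\mu^2}=e_{-\bm\alpha,\mu^2}$ and $e_{\bm\alpha,\mu^2}\,e_{\bm\alpha,-\mu^2}\equiv 1$; it rescales in the order parameter by $e_{\bm\alpha,\mu^2}(\bm{x}/\sigma)=e_{\bm\alpha,(\mu/\sigma)^2}(\bm{x})$; and it obeys the cross-term expansion $e_{\bm\alpha,\mu^2}(\bm{s}+\mathbf{y})=e_{\bm\alpha,\mu^2}(\bm{s})\,e_{\bm\alpha,\mu^2}(\mathbf{y})\,\exp\!\big(2i\mu^2\sum_{k}a(\alpha_k)s_ky_k\big)$. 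I shall also use $\overline{c(\alpha_k)}=\sqrt{1+i\cot\alpha_k}=c(-\alpha_k)$, so that $\overline{c(\bm\alpha_\lambda)}=c\big((-\bm\alpha)_\lambda\big)$, together with $|{-\bm{a}}|_m=|\bm{a}|_m$ and $|\sigma\bm{a}|_m=\sigma^{\N}|\bm{a}|_m$.

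I would dispose of (i) and (ii) first, straight from the bracket form $(W_{\psi}^{\bm\alpha,\lambda}f)(\bm{a},\bm{b})=\langle f,\mathbf{\Psi}_{\bm\alpha,\lambda,\bm{a},\bm{b}}\rangle$: the dependence on $f$ is linear since it occupies the first slot of the inner product, whereas $\psi$ enters $\mathbf{\Psi}_{\bm\alpha,\lambda,\bm{a},\bm{b}}$ linearly (through $\tilde{\psi}$) and $\mathbf{\Psi}$ sits in the conjugated slot, which yields the conjugate-linear dependence on $\psi$ claimed in (ii). For (iv) I would conjugate the displayed integral for $W_{\psi}^{\bm\alpha,\lambda}\overline{f}$: then $\overline{c(\bm\alpha_\lambda)}$ becomes $c((-\bm\alpha)_\lambda)$, $\overline{e_{\bm\alpha,\pm\lambda^2}}$ becomes $e_{-\bm\alpha,\pm\lambda^2}$, and the function $\widetilde{\overline{\psi}}$ formed with order $-\bm\alpha$ equals $\overline{\tilde{\psi}}$ formed with order $\bm\alpha$; reassembling the pieces is precisely the integral defining $\overline{W_{\overline{\psi}}^{-\bm\alpha,\lambda}f(\bm{a},\bm{b})}$. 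For (v) I would substitute $\bm{s}=-\bm{t}$ in the integral for $W_{\psi}^{\bm\alpha,\lambda}Pf$; evenness of $e_{\bm\alpha,\pm\lambda^2}$ and $|{-\bm{a}}|_m=|\bm{a}|_m$ recast the integrand as that of $W_{\psi}^{\bm\alpha,\lambda}f$ with base point $-\bm{b}$ and dilation $-\bm{a}$, i.e.\ $W_{\psi}^{\bm\alpha,\lambda}f(-\bm{a},-\bm{b})$.

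The two computations with real content are (vi) and (iii), which I would leave for last. For (vi), after the shift $\bm{s}=\bm{t}-\mathbf{y}$ the factor $e_{\bm\alpha,\lambda^2}(\bm{t})=e_{\bm\alpha,\lambda^2}(\bm{s}+\mathbf{y})$ splits, via the cross-term expansion, into $e_{\bm\alpha,\lambda^2}(\bm{s})\,e_{\bm\alpha,\lambda^2}(\mathbf{y})$ times $\exp\!\big(2i\lambda^2\sum_k a(\alpha_k)s_ky_k\big)$; absorbing the last factor into $f(\bm{s})$ produces exactly $\check{f}(\bm{s})$, while $\tfrac{\bm{s}+\mathbf{y}-\bm{b}}{\bm{a}}=\tfrac{\bm{s}-(\bm{b}-\mathbf{y})}{\bm{a}}$ shifts the base point to $\bm{b}-\mathbf{y}$. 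Recognising the remaining integral as $\dfrac{1}{c(\bm\alpha_\lambda)\,e_{\bm\alpha,-\lambda^2}(\bm{b}-\mathbf{y})}(W_{\psi}^{\bm\alpha,\lambda}\check{f})(\bm{a},\bm{b}-\mathbf{y})$, cancelling $c(\bm\alpha_\lambda)$ and using $1/e_{\bm\alpha,-\lambda^2}(\bm{b}-\mathbf{y})=e_{\bm\alpha,\lambda^2}(\bm{b}-\mathbf{y})$ gives the stated relation. For (iii) I would put $W_{\psi}^{\bm\alpha,\lambda}D_\sigma f$ into integral form, change variables $\bm{u}=\sigma\bm{t}$ (Jacobian $\sigma^{-\N}$), apply the chirp-scaling rule to $e_{\bm\alpha,\lambda^2}(\bm{u}/\sigma)$ and inside $\tilde{\psi}$, and use $|\sigma\bm{a}|_m=\sigma^{\N}|\bm{a}|_m$ to pull out $\sigma^{-\N/2}$; what remains is $(W_{\psi}^{\bm\alpha,\lambda/\sigma}f)(\sigma\bm{a},\sigma\bm{b})$ up to the ratio $c(\bm\alpha_\lambda)/c(\bm\alpha_{\lambda/\sigma})$ of normalizing constants, which together with the powers of $\sigma$ assembles into $C'=\sigma^{\N/2}\overline{\big(c(\bm\alpha_\lambda)/c(\bm\alpha_{\lambda/\sigma})\big)}$. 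I do not expect any deep obstacle: each step is just a change of variables in an absolutely convergent integral, and the only genuine care lies in the bookkeeping for (iii), where the single substitution simultaneously rescales the order $\lambda\mapsto\lambda/\sigma$, the dilation $\bm{a}\mapsto\sigma\bm{a}$ and the translation $\bm{b}\mapsto\sigma\bm{b}$, so one must keep the several quadratic-phase factors and the two normalizers $c(\bm\alpha_\lambda)$, $c(\bm\alpha_{\lambda/\sigma})$ correctly matched.
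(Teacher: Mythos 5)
Your proposal follows essentially the same route as the paper: (i)--(ii) read off from the inner-product form, (iv)--(vi) by conjugation, the substitutions $\bm{t}\mapsto-\bm{t}$ and $\bm{t}\mapsto\bm{t}+\mathbf{y}$ with the chirp cross-term expansion, and (iii) by the change of variables $\bm{u}=\sigma\bm{t}$ together with the rescaling $e_{\bm\alpha,\lambda^2}(\bm{u}/\sigma)=e_{\bm\alpha,(\lambda/\sigma)^2}(\bm{u})$ and the identification of the resulting window with $\mathbf{\Psi}_{\bm\alpha,\lambda/\sigma,\sigma\bm{a},\sigma\bm{b}}$. The bookkeeping you describe matches the paper's computation step for step, so the proposal is correct in the same sense and to the same level of rigour as the published proof.
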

\begin{proof}
The proof of \ref{P1PropMFrWT1} and \ref{P1PropMFrWT2} are straight forward and thus can be omitted.\\

\ref{P1PropMFrWT3} Using the definition of $D_{\sigma}f,$ we have
\begin{eqnarray}\label{P1eqn3}\notag
(W_{\psi}^{\bm\alpha,\lambda}D_{\sigma}f)(\bm{a},\bm{b})=\int_{\mathbb{R}^{\N}}(D_{\sigma}f)(\bm{t})\overline{{\mathbf{\Psi}}_{\bm{\alpha},\lambda,\bm{a},\bm{b}}(\bm{t})}d\bm{t}\\ 
=\displaystyle{\int_{\mathbb{R}^{\N}}}f(\sigma\bm{t})\overline{{\mathbf{\Psi}}_{\bm{\alpha},\lambda,\bm{a},\bm{b}}(\bm{t})}d\bm{t},
\end{eqnarray}
on putting $\sigma\bm{t}=\bm{u}$ and
$d\bm{t}=\displaystyle\frac{d\bm{u}}{\sigma}$ in equation (\ref{P1eqn3}), we get
\begin{eqnarray}\label{P1eqn4}
(W_{\psi}^{\bm\alpha,\lambda}f_{\sigma})(\bm{a},\bm{b})=\frac{1}{\sigma^{\N}}\displaystyle{\int_{\mathbb{R}^{\N}}}f(\bm{u})\overline{{\mathbf{\Psi}}_{\bm{\alpha},\lambda,\bm{a},\bm{b}}\left(\frac{\bm{u}}{\sigma}\right)}d\bm{u}.
\end{eqnarray}
Now, 
\begin{eqnarray*} 
{\mathbf{\Psi}}_{\bm{\alpha},\lambda,\bm{a},\bm{b}}(\bm{t})&=&\overline{c(\bm\alpha_{\lambda})} e_{\bm{\alpha},\lambda^2}(\bm{b})e_{\bm{\alpha},-\lambda^2}(\bm{t})\frac{1}{\sqrt{|\bm{a}|_m}}\tilde{\psi}\left(\frac{\bm{t}-\bm{b}}{\bm{a}}\right)\\ 
 \Rightarrow {\mathbf{\Psi}}_{\bm{\alpha},\lambda,\bm{a},\bm{b}}\left(\frac{\bm{u}}{\sigma}\right)&=&\overline{c(\bm\alpha_{\lambda})}e_{\bm{\alpha},\lambda^2}(\bm{b})e_{\bm{\alpha},-\lambda^2}\left(\frac{\bm{u}}{\sigma}\right)\frac{1}{\sqrt{|\bm{a}|_m}}\tilde{\psi}\left(\frac{(\frac{\bm{u}}{\sigma})-\bm{b}}{\bm{a}}\right)\\ 
&=&\overline{c(\bm\alpha_{\lambda})}e_{\bm{\alpha},\frac{\lambda^2}{\sigma^2}}(\sigma\bm{b})e_{\bm{\alpha},\frac{-\lambda^2}{\sigma^2}}(\bm{u})\frac{1}{\sqrt{|\bm{a}|_m}}\tilde{\psi}\left(\frac{\bm{u}-\sigma\bm{b}}{\sigma{\bm{a}}}\right).\\
&=&\sigma^{\frac{{\N}}{2}}\overline{c(\bm\alpha_{\lambda})}e_{\bm{\alpha},\frac{\lambda^2}{\sigma^2}}(\sigma\bm{b})e_{\bm{\alpha},\frac{-\lambda^2}{\sigma^2}}(\bm{u})\frac{1}{\sqrt{|\sigma\bm{a}|_m}}\tilde{\psi}\left(\frac{\bm{u}-\sigma\bm{b}}{\sigma{\bm{a}}}\right).
\end{eqnarray*}
Therefore,
\begin{eqnarray}\label{P1eqn5}
\mathbf{\Psi}_{\bm{\alpha},\lambda,\bm{a},\bm{b}}\left(\frac{\bm{u}}{\sigma}\right)=C'{\mathbf{\Psi}}_{\bm{\alpha},\frac{\lambda}{\sigma},\sigma\bm{a},\sigma\bm{b}}(\bm{u}),
\end{eqnarray}
 where $C'=\sigma^{\frac{\N}{2}}\overline{\left(\frac{c(\alpha_{\lambda})}{c\left(\alpha_{\frac{\lambda}{\sigma}}\right)}\right)}.$
We now get the required result by putting equation (\ref{P1eqn5}) into equation (\ref{P1eqn4}).\\

\ref{P1PropMFrWT4}
\begin{eqnarray*}
(W_{\psi}^{\bm\alpha,\lambda}\overline{f})(\bm{a},\bm{b})&=&\left\langle \overline{f}(\bm{t}),\mathbf{\Psi}_{\bm{\alpha},\lambda,\bm{a},\bm{b}}(\bm{t})\right\rangle_{L^2(\mathbb{R}^{\N})}\\
&=&\left\langle \overline{f}(\bm{t}),{c((\boldsymbol-\alpha)_\lambda)}e_{\bm{\alpha},\lambda^2}(\bm{b})e_{\bm{\alpha},-\lambda^2}(\bm{t}){\frac{1}{\sqrt{|\bm{a}|_m}}\tilde{\psi}}\left(\frac{\bm{t}-\bm{b}}{\bm{a}}\right)\right\rangle_{L^2(\mathbb{R}^{\N})}\\
&=&\overline{\left\langle f(\bm{t}),\overline{c((\boldsymbol-\alpha)_\lambda)}e_{\boldsymbol{-\alpha},\lambda^2}(\bm{b})e_{\boldsymbol{-\alpha},-\lambda^2}(\bm{t}){\frac{1}{\sqrt{|\bm{a}|_m}}\overline{\tilde{\psi}\left(\frac{\bm{t}-\bm{b}}{\bm{a}}\right)}}\right\rangle_{L^2(\mathbb{R}^{\N})}}\\
&=&\overline{\left\langle f(\bm{t}),\overline{c((\boldsymbol-\alpha)_\lambda)}e_{\boldsymbol{-\alpha},\lambda^2}(\bm{b})e_{\boldsymbol{-\alpha},-\lambda^2}(\bm{t})\frac{1}{\sqrt{|\bm{a}|_m}}\overline{({\psi e_{\alpha,\lambda^2})}\left(\frac{\bm{t}-\bm{b}}{\bm{a}}\right)}\right\rangle_{L^2(\mathbb{R}^{\N})}}\\
&=&\overline{\left\langle f(\bm{t}),\overline{c((\boldsymbol-\alpha)_\lambda)}e_{\boldsymbol{-\alpha},\lambda^2}(\bm{b})e_{\boldsymbol{-\alpha},-\lambda^2}(\bm{t})\frac{1}{\sqrt{|\bm{a}|_m}}(\overline{\psi} e_{-\alpha,\lambda^2})\left(\frac{\bm{t}-\bm{b}}{\bm{a}}\right)\right\rangle_{L^2(\mathbb{R}^{\N})}}\\
&=&\overline{W_{\overline{\psi}}^{-\bm\alpha,\lambda}f(\bm{a},\bm{b})}.
\end{eqnarray*}

\ref{P1PropMFrWT5}  Using the definition of $Pf,$ we have
\begin{eqnarray}\label{P1eqn5a}     \notag
(W_{\psi}^{\bm\alpha,\lambda}Pf)(\bm{a},\bm{b})=\displaystyle\int_{\mathbb{R}^{\N}}Pf(\bm{t})\overline{\mathbf{\Psi}_{\bm{\alpha},\lambda,\bm{a},\bm{b}}(\bm{t})}d\bm{t} \\ 
=\displaystyle{\int_{\mathbb{R}^{\N}}}f(-\bm{t})\overline{{\mathbf{\Psi}}_{\bm{\alpha},\lambda,\bm{a},\bm{b}}(\bm{t})}d\bm{t}.
\end{eqnarray}

Putting $-\bm{t}=\mathbf{x}$ in equation (\ref{P1eqn5a}), we get
\begin{eqnarray}\label{P1eqn5b} 
(W_{\psi}^{\bm\alpha,\lambda}Pf)(\bm{a},\bm{b})=\displaystyle{\int_{\mathbb{R}^{\N}}}f(\mathbf{x})\overline{\mathbf{\Psi}_{\bm{\alpha},\lambda,\bm{a},\bm{b}}(-\mathbf{x}})d\mathbf{x}.
\end{eqnarray}
Now,
\begin{eqnarray}\label{P1eqn5c} \notag
\mathbf{\Psi}_{\bm{\alpha},\lambda,\bm{a},\bm{b}}(-\bm{t})&=&\overline{c(\bm\alpha_{\lambda})}e_{\bm{\alpha},\lambda^2}(\bm{b})e_{\bm{\alpha},-\lambda^2}(-\bm{t})\frac{1}{\sqrt{|\bm{a}|_m}}\tilde{\psi}\left(\frac{-\bm{t}-\bm{b}}{\bm{a}}\right)\\ \notag
&=&\overline{c(\bm\alpha_{\lambda})}e_{\bm{\alpha},\lambda^2}(-\bm{b})e_{\bm{\alpha},-\lambda^2}(\bm{t})\frac{1}{\sqrt{|\bm{a}|_m}}\tilde{\psi}\left(\frac{\bm{t}-(-\bm{b})}{-\bm{a}}\right)\\
&=&{\mathbf{\Psi}}_{\bm{\alpha},\lambda,-\bm{a},-\bm{b}}(\bm{t}).
\end{eqnarray}
We get the required result by putting equation (9) into equation (8).\\

\ref{P1PropMFrWT6} Using the definition of $\tau_{\mathbf{y}}f,$ we get
\begin{eqnarray}\label{P1eqn5d} \notag
(W_\psi^{\bm\alpha,\lambda}\tau_{\mathbf{y}}f)(\bm{a},\bm{b})&=&\displaystyle\int_{\mathbb{R}^{\N}}\tau_{\mathbf{y}}f(\mathbf{x})\overline{\mathbf{\Psi}_{\bm{\alpha},\lambda,\bm{a},\bm{b}}(\mathbf{x})}d\mathbf{x} \\ 
&=&\displaystyle\int_{\mathbb{R}^{\N}}f(\mathbf{x}-\mathbf{y})\overline{\mathbf{\Psi}_{\bm{\alpha},\lambda,\bm{a},\bm{b}}(\mathbf{x})}d\mathbf{x},
\end{eqnarray}
 and by putting $\mathbf{x}-\mathbf{y}=\bm{t}, \mathbf{x}=\bm{t}+\mathbf{y}, d\mathbf{x}=d\bm{t}$ in equation (\ref{P1eqn5d}), we get
\begin{eqnarray}\label{P1eqn5e}
 (W_\psi^{\bm\alpha,\lambda}\tau_{\mathbf{y}}f)(\bm{a},\bm{b})=\displaystyle\int_{\mathbb{R}^{\N}}f(\bm{t})\overline{\mathbf{\Psi}_{\bm\alpha,\lambda,\bm{a},\bm{b}}(\bm{t}+\mathbf{y})}d\bm{t}.
\end{eqnarray}
Now,
\begin{align} \label{P1eqn5f} \notag
\mathbf{\Psi}_{\bm\alpha,\lambda,\bm{a},\bm{b}}(\bm{t}+\mathbf{y}) = &  \overline{c(\bm\alpha_{\lambda})}e_{\bm{\alpha},\lambda^2}(\bm{b})e_{\bm{\alpha},-\lambda^2}(\bm{t}+\mathbf{y}){\frac{1}{\sqrt{|\bm{a}|_m}}\tilde{\psi}}\left(\frac{\bm{t}+\mathbf{y}-\bm{b}}{\bm{a}}\right)\\ \notag
 = &\overline{c(\bm\alpha_{\lambda})}e_{\bm{\alpha},\lambda^2}(\bm{b})e_{\bm{\alpha},-\lambda^2}(\bm{t}+\mathbf{y}){\frac{1}{\sqrt{|\bm{a}|_m}}\tilde{\psi}}\left(\frac{\bm{t}-(\bm{b}-\mathbf{y})}{\bm{a}}\right)\\ \notag
= &e_{\bm{\alpha},\lambda^2}(\bm{b})e_{\bm{\alpha},-\lambda^2}(\bm{b}-\mathbf{y})e_{\bm{\alpha},-\lambda^2}(\mathbf{y})e^{-i\lambda^2\sum_{k=1}^{\N}a(\alpha_k)2{t_{k}}y_k}\Big[\overline{c(\alpha_{\lambda})}e_{\bm{\alpha},\lambda^2}(\bm{b}-\mathbf{y})e_{\bm{\alpha},-\lambda^2}(\bm{t})\\\notag
&~~~{\frac{1}{\sqrt{|\bm{a}|_m}}}\tilde{\psi}\left(\frac{\bm{t}-(\bm{b}-\mathbf{y})}{\bm{a}}\right)\Big]\\ 
= &e_{\bm{\alpha},\lambda^2}(\bm{b})e_{\bm{\alpha},-\lambda^2}(\bm{b}-\mathbf{y})e_{\bm{\alpha},-\lambda^2}(\mathbf{y})e^{-i\lambda^2\sum_{k=1}^{\N}a(\alpha_k)2{t_{k}}y_k}{\mathbf{\Psi}}_{\bm\alpha,\lambda,\bm{a},\bm{b}-\mathbf{y}}(\bm{t}).
\end{align}

We get the required result by putting equation(\ref{P1eqn5f}) into equation (\ref{P1eqn5e}).
\end{proof}
Now we will derive some lemmas that will help us to demonstrate the inner product relation for MFrWT . 
\begin{lemma}
 (FrFT of $\mathbf{\Psi}_{\bm{\alpha},\lambda,\bm{a},\bm{b}}(\bm{t})$) Suppose a non-zero function  $\psi\in L^2(\mathbb{R}^{\N})$ is given, then 
\begin{eqnarray}
(\mathfrak{F}_{\bm{\alpha},\lambda}{\mathbf{\Psi}}_{\bm{\alpha},\lambda,\bm{a},\bm{b}})(\bm{\xi})&=&\overline{c(\bm\alpha_\lambda)}\sqrt{|\bm{a}|_m} e_{\bm{\alpha},\lambda^2}(\bm{b})e_{\alpha,\lambda^2}(\bm{\xi})e_{\bm{\alpha},-\lambda^2}(\bm{a}\bm{\xi})e^{-i\lambda^2\sum_{k=1}^{\N}b_{k}\xi_{k}\csc\alpha_{k}}(\mathfrak{F}_{\bm{\alpha},\lambda}\psi)(\bm{a}\bm{\xi}).
\end{eqnarray}
\end{lemma}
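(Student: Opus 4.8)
The plan is to compute $\mathfrak{F}_{\bm\alpha,\lambda}\mathbf{\Psi}_{\bm\alpha,\lambda,\bm a,\bm b}$ straight from the integral definition \eqref{P1eqn1}, using the product form of the kernel $K_{\bm\alpha,\lambda}(\bm t,\bm\xi)=c(\bm\alpha_\lambda)(\sqrt{2\pi})^{-\N} e_{\bm\alpha,\lambda^2}(\bm t) e_{\bm\alpha,\lambda^2}(\bm\xi) e^{-i\lambda^2\sum_k t_k\xi_k\csc\alpha_k}$ recorded after the definition of the MFrFT, and then reducing the remaining integral to a rescaled copy of $\mathfrak{F}_{\bm\alpha,\lambda}\psi$ by an affine substitution. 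For absolute convergence of the integrals I would first establish the identity for $\psi\in L^1(\mathbb{R}^{\N})\cap L^2(\mathbb{R}^{\N})$ and then extend to all of $L^2(\mathbb{R}^{\N})$ by density together with the $L^2$-continuity of $\mathfrak{F}_{\bm\alpha,\lambda}$.

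First I would write $(\mathfrak{F}_{\bm\alpha,\lambda}\mathbf{\Psi}_{\bm\alpha,\lambda,\bm a,\bm b})(\bm\xi)=\int_{\mathbb{R}^{\N}}\mathbf{\Psi}_{\bm\alpha,\lambda,\bm a,\bm b}(\bm t)\,K_{\bm\alpha,\lambda}(\bm t,\bm\xi)\,d\bm t$ and substitute $\mathbf{\Psi}_{\bm\alpha,\lambda,\bm a,\bm b}(\bm t)=\overline{c(\bm\alpha_\lambda)} e_{\bm\alpha,\lambda^2}(\bm b) e_{\bm\alpha,-\lambda^2}(\bm t) |\bm a|_m^{-1/2}\tilde\psi((\bm t-\bm b)/\bm a)$ together with the product kernel. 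The key first simplification is the cancellation $e_{\bm\alpha,-\lambda^2}(\bm t) e_{\bm\alpha,\lambda^2}(\bm t)=1$, which kills the $\bm t$-quadratic chirp coming from $\mathbf{\Psi}$; the only remaining $\bm t$-dependence then lives in $\tilde\psi((\bm t-\bm b)/\bm a)$ and in the linear phase $e^{-i\lambda^2\sum_k t_k\xi_k\csc\alpha_k}$, while the constants $\overline{c(\bm\alpha_\lambda)}c(\bm\alpha_\lambda)$, $(\sqrt{2\pi})^{-\N}$, $|\bm a|_m^{-1/2}$ and the factors $e_{\bm\alpha,\lambda^2}(\bm b)$, $e_{\bm\alpha,\lambda^2}(\bm\xi)$ come out of the integral.

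Next I would perform the substitution $\bm u=(\bm t-\bm b)/\bm a$, i.e. $t_k=a_k u_k+b_k$, with $d\bm t=|\bm a|_m\,d\bm u$. This splits the linear phase as $e^{-i\lambda^2\sum_k b_k\xi_k\csc\alpha_k}\,e^{-i\lambda^2\sum_k u_k a_k\xi_k\csc\alpha_k}$, extracting the $\bm b$-part. Writing $\tilde\psi(\bm u)=\psi(\bm u)e_{\bm\alpha,\lambda^2}(\bm u)$, the surviving integrand is $\psi(\bm u)\,e_{\bm\alpha,\lambda^2}(\bm u)\,e^{-i\lambda^2\sum_k u_k(a_k\xi_k)\csc\alpha_k}$, which I would compare with $K_{\bm\alpha,\lambda}(\bm u,\bm a\bm\xi)=c(\bm\alpha_\lambda)(\sqrt{2\pi})^{-\N} e_{\bm\alpha,\lambda^2}(\bm u) e_{\bm\alpha,\lambda^2}(\bm a\bm\xi) e^{-i\lambda^2\sum_k u_k a_k\xi_k\csc\alpha_k}$. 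Hence the $\bm u$-integral equals $c(\bm\alpha_\lambda)^{-1}(\sqrt{2\pi})^{\N} e_{\bm\alpha,-\lambda^2}(\bm a\bm\xi)(\mathfrak{F}_{\bm\alpha,\lambda}\psi)(\bm a\bm\xi)$, the factor $e_{\bm\alpha,-\lambda^2}(\bm a\bm\xi)$ being exactly the reciprocal of the $\bm a\bm\xi$-quadratic term $e_{\bm\alpha,\lambda^2}(\bm a\bm\xi)$ carried inside $K_{\bm\alpha,\lambda}(\bm u,\bm a\bm\xi)$. Collecting everything — $\overline{c(\bm\alpha_\lambda)}\,c(\bm\alpha_\lambda)/c(\bm\alpha_\lambda)=\overline{c(\bm\alpha_\lambda)}$, $|\bm a|_m/\sqrt{|\bm a|_m}=\sqrt{|\bm a|_m}$, and $(\sqrt{2\pi})^{\pm\N}$ cancelling — gives precisely the claimed formula.

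The computation is elementary; the only real difficulty is bookkeeping of the several chirp factors $e_{\bm\alpha,\pm\lambda^2}(\cdot)$ and the $\csc\alpha_k$ linear phases, and in particular making sure that it is the $\bm u$-quadratic factor $e_{\bm\alpha,\lambda^2}(\bm u)$ from $\tilde\psi$ that recombines with the scaled kernel to form $(\mathfrak{F}_{\bm\alpha,\lambda}\psi)(\bm a\bm\xi)$, while the scaled-argument quadratic factor $e_{\bm\alpha,\lambda^2}(\bm a\bm\xi)$ is what is left over as the explicit $e_{\bm\alpha,-\lambda^2}(\bm a\bm\xi)$ prefactor after inversion. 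Once these phases are tracked correctly, the constants collapse to the stated $\overline{c(\bm\alpha_\lambda)}\sqrt{|\bm a|_m}$, and the identity follows.
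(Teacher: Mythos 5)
Your proposal is correct and follows essentially the same route as the paper's proof: expand the integral with the product-form kernel, cancel the chirps $e_{\bm\alpha,-\lambda^2}(\bm t)e_{\bm\alpha,\lambda^2}(\bm t)=1$, substitute $\bm t=\bm a\bm u+\bm b$, and reassemble the $\bm u$-integrand into $\psi(\bm u)K_{\bm\alpha,\lambda}(\bm u,\bm a\bm\xi)$ at the cost of the leftover $e_{\bm\alpha,-\lambda^2}(\bm a\bm\xi)$ prefactor. The only difference is your added $L^1\cap L^2$ density remark for convergence, which the paper omits but which does no harm.
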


\begin{proof}
For any $ \bm{\xi}\in \mathbb{R}^{\N},$
\begin{eqnarray} \label{P1eqn6}\notag
(\mathfrak{F}_{\bm{\alpha},\lambda}{\mathbf{\Psi}}_{\bm{\alpha},\lambda,\bm{a},\bm{b}})(\bm{\xi}) &=& \displaystyle\int_{\mathbb{R}^{\N}}\mathbf{\Psi}_{\bm{\alpha},\lambda,\bm{a},\bm{b}}(\bm{t})K_{\bm{\alpha},\lambda}\left(\bm{t},\bm{\xi}\right) d\bm{t}\\ \notag
&=&\overline{c(\bm\alpha_\lambda)}e_{\bm{\alpha},\lambda^2}(\bm{b})\displaystyle\int_{\mathbb{R}^{\N}}e_{\bm{\alpha},-\lambda^2}(\bm{t})\frac{1}{\sqrt{|\bm{a}|_m}}\tilde{\psi}\left(\frac{\bm{t}-\bm{b}}{\bm{a}}\right)\frac{c(\bm\alpha_\lambda)}{(\sqrt{2\pi})^{\N}}e_{\bm{\alpha},\lambda^2}(\bm{t})e_{\bm{\alpha},\lambda^2}(\bm{\xi})e^{-i\lambda^2\sum_{k=1}^{\N}t_{k}\xi_{k}\csc\alpha_{k}}d\bm{t}\\
&=&\frac{1}{\sqrt{|\bm{a}|_m}}\frac{|c({\bm\alpha_\lambda})|^2}{(\sqrt{2\pi})^{\N}}e_{\bm{\alpha},\lambda^2}(\bm{b})\displaystyle\int_{\mathbb{R}^{\N}}\tilde{\psi}\left(\frac{\bm{t}-\bm{b}}{\bm{a}}\right)e_{\bm{\alpha},\lambda^2}(\bm{\xi})e^{-i\lambda^2\sum_{k=1}^{\N}t_{k}\xi_{k}\csc\alpha_{k}}d\bm{t}.
\end{eqnarray}
Putting $\bm{t}=\bm{a}\bm{u}+\bm{b}$ and $d\bm{t}=|\bm{a}|_md\bm{u}$ in
equation (\ref{P1eqn6}), we've
\begin{align*}
(\mathfrak{F}_{\bm{\alpha},\lambda}&{\mathbf{\Psi}}_{\bm{\alpha},\lambda,\bm{a},\bm{b}})(\bm{\xi})\\&=\frac{|c({\bm\alpha_\lambda})|^2}{(\sqrt{2\pi})^{\N}}\sqrt{|\bm{a}|_m}e_{\bm{\alpha},\lambda^2}(\bm{b})e_{\bm{\alpha},\lambda^2}(\bm{\xi})e_{\bm{\alpha},-\lambda^2}(\bm{a}\bm{\xi})e^{-i\lambda^2\sum_{k=1}^{\N}b_{k}\xi_{k}\csc\alpha_{k}}\int_{\mathbb{R}^{\N}}\tilde{\psi}(\bm{u})e_{\bm{\alpha},\lambda^2}(\bm{a}\bm{\xi})e^{-i\lambda^2\sum_{k=1}^{\N}a_{k}u_{k}\xi_{k}\csc\alpha_{k}}d\bm{u}\\
&=\overline{c(\bm\alpha_\lambda)}\sqrt{|\bm{a}|_m}e_{\bm{\alpha},\lambda^2}(\bm{b})e_{\bm{\alpha},\lambda^2}(\bm{\xi})e_{\bm{\alpha},-\lambda^2}(\bm{a}\bm{\xi})e^{-i\lambda^2\sum_{k=1}^{\N}b_{k}\xi_{k}\csc\alpha_{k}}\int_{\mathbb{R}^{\N}}\psi(\bm{u})K_{\bm{\alpha},\lambda}(\bm{u},\bm{a}\bm{\xi})d\bm{u}\\
&=\overline{c(\bm\alpha_\lambda)}\sqrt{|\bm{a}|_m} e_{\bm{\alpha},\lambda^2}(\bm{b})e_{\bm{\alpha},\lambda^2}(\bm{\xi})e_{\bm{\alpha},-\lambda^2}(\bm{a}\bm{\xi})e^{-i\lambda^2\sum_{k=1}^{\N}b_{k}\xi_{k}\csc\alpha_{k}}(\mathfrak{F}_{\bm{\alpha},\lambda}\psi)(\bm{a}\bm{\xi}).
\end{align*}
This completes the proof.
\end{proof}

\begin{lemma}\label{P1lemma1}
(FrFT of $W_{\psi}^{\bm\alpha,\lambda}{f}(\bm{a},\bm{b})$) Suppose $\psi$ be an admissible wavelet and for any arbitrary function $f\in L^2(\mathbb{R}^{\N})$, then\\
\begin{eqnarray*}
\mathfrak{F}_{\bm{\alpha},\lambda}(W_{\psi}^{\bm\alpha,\lambda}{f})(\bm{a},\bm{b})(\bm{\xi})=\sqrt{|\bm{a}|_m}(\sqrt{2\pi})^{\N} \frac{c(\bm\alpha_\lambda)}{\overline{c(\bm\alpha_\lambda)}}F(\bm{\xi}),\\
\end{eqnarray*}
where $F(\bm{\xi})=e_{\bm{\alpha},\lambda^2}(\bm{a}\bm{\xi})(\mathfrak{F}_{\bm{\alpha},\lambda}f)(\bm{\xi})\overline{(\mathfrak{F}_{\bm{\alpha},\lambda}\psi)(\bm{a}\bm{\xi})}.$
\end{lemma}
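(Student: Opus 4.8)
The plan is to compute the transform-domain expression directly from the convolution form of the MFrWT, exploiting the fact that — through the rewritten kernel — the MFrFT is just an ordinary Fourier transform dressed with chirp factors, and then invoking the classical Fourier convolution theorem.

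First I would start from $(W_{\psi}^{\bm\alpha,\lambda}f)(\bm a,\bm b)=c(\bm\alpha_\lambda)e_{\bm\alpha,-\lambda^2}(\bm b)(\tilde f\ast\breve g)(\bm b)$ and insert the rewritten kernel $K_{\bm\alpha,\lambda}(\bm b,\bm\xi)=\frac{c(\bm\alpha_\lambda)}{(\sqrt{2\pi})^{\N}}e_{\bm\alpha,\lambda^2}(\bm b)e_{\bm\alpha,\lambda^2}(\bm\xi)e^{-i\lambda^2\sum_{k}b_k\xi_k\csc\alpha_k}$ into $\mathfrak{F}_{\bm\alpha,\lambda}(W_{\psi}^{\bm\alpha,\lambda}f)(\bm a,\cdot)(\bm\xi)=\int_{\mathbb R^{\N}}(W_{\psi}^{\bm\alpha,\lambda}f)(\bm a,\bm b)K_{\bm\alpha,\lambda}(\bm b,\bm\xi)\,d\bm b$. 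The modulation $e_{\bm\alpha,-\lambda^2}(\bm b)$ coming from the definition of the MFrWT cancels the chirp $e_{\bm\alpha,\lambda^2}(\bm b)$ produced by the kernel, since $e_{\bm\alpha,\lambda^2}(\bm b)e_{\bm\alpha,-\lambda^2}(\bm b)=1$, so what remains is $\frac{c(\bm\alpha_\lambda)^2}{(\sqrt{2\pi})^{\N}}e_{\bm\alpha,\lambda^2}(\bm\xi)\int_{\mathbb R^{\N}}(\tilde f\ast\breve g)(\bm b)\,e^{-i\lambda^2\sum_k b_k\xi_k\csc\alpha_k}\,d\bm b$, i.e. (up to constants) the ordinary Fourier transform of the convolution $\tilde f\ast\breve g$ evaluated at the frequency $\bm\omega$ with $\omega_k=\lambda^2\xi_k\csc\alpha_k$.

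Next I would apply the classical Fourier convolution theorem to split this into the product of the ordinary Fourier transforms of $\tilde f$ and of $\breve g$ at $\bm\omega$, and then translate each factor back into MFrFT language. For $\tilde f=f\,e_{\bm\alpha,\lambda^2}$ the Fourier integral at $\bm\omega$ is exactly the ``core'' integral appearing in the definition of $\mathfrak{F}_{\bm\alpha,\lambda}f(\bm\xi)$, so it equals a constant times $e_{\bm\alpha,-\lambda^2}(\bm\xi)\,\mathfrak{F}_{\bm\alpha,\lambda}f(\bm\xi)$. For $\breve g(\bm t)=\overline{|\bm a|_m^{-1/2}\tilde\psi(-\bm t/\bm a)}$ I would perform the substitution $\bm s=-\bm t/\bm a$ (so $d\bm t=|\bm a|_m\,d\bm s$, valid over all of $\mathbb R^{\N}$ irrespective of the signs of the $a_k$), which produces the factor $\sqrt{|\bm a|_m}$, a complex conjugate, and changes the argument of $\mathfrak{F}_{\bm\alpha,\lambda}\psi$ to $\bm a\bm\xi$; after conjugation the chirp $e_{\bm\alpha,-\lambda^2}(\bm a\bm\xi)$ becomes $e_{\bm\alpha,\lambda^2}(\bm a\bm\xi)$, giving $\sqrt{|\bm a|_m}\,\frac{1}{\overline{c(\bm\alpha_\lambda)}}e_{\bm\alpha,\lambda^2}(\bm a\bm\xi)\overline{\mathfrak{F}_{\bm\alpha,\lambda}\psi(\bm a\bm\xi)}$. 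Finally I would collect constants — $(2\pi)^{\N}/(\sqrt{2\pi})^{\N}=(\sqrt{2\pi})^{\N}$, and the three $c(\bm\alpha_\lambda)$ factors combining to $c(\bm\alpha_\lambda)/\overline{c(\bm\alpha_\lambda)}$ — and note that the $\bm\xi$-chirps cancel ($e_{\bm\alpha,\lambda^2}(\bm\xi)e_{\bm\alpha,-\lambda^2}(\bm\xi)=1$), which leaves precisely $\sqrt{|\bm a|_m}(\sqrt{2\pi})^{\N}\tfrac{c(\bm\alpha_\lambda)}{\overline{c(\bm\alpha_\lambda)}}F(\bm\xi)$ with $F(\bm\xi)=e_{\bm\alpha,\lambda^2}(\bm a\bm\xi)(\mathfrak{F}_{\bm\alpha,\lambda}f)(\bm\xi)\overline{(\mathfrak{F}_{\bm\alpha,\lambda}\psi)(\bm a\bm\xi)}$, as claimed.

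The routine-but-delicate part is the bookkeeping of the chirp factors $e_{\bm\alpha,\pm\lambda^2}(\cdot)$ under the change of variables and the complex conjugation — in particular keeping straight which arguments are $\bm\xi$, $\bm b$, or $\bm a\bm\xi$ — together with justifying the interchange of integrals in the convolution theorem; the latter I would handle by first carrying out the computation for $f,\psi\in L^1(\mathbb R^{\N})\cap L^2(\mathbb R^{\N})$, where Fubini applies directly, and then extending to all of $L^2(\mathbb R^{\N})$ by density. One could alternatively shortcut the argument by combining Lemma 3.1 with the Parseval relation for the MFrFT, but since that relation is not established at this point, the convolution-theorem route above is the self-contained one.
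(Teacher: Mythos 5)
Your proposal is correct, and the computation checks out (I verified the constant bookkeeping: the kernel prefactor contributes $c(\bm\alpha_\lambda)^2/(\sqrt{2\pi})^{\N}$, each of the two Fourier factors contributes $(\sqrt{2\pi})^{\N}$ divided by $c(\bm\alpha_\lambda)$ or $\overline{c(\bm\alpha_\lambda)}$ respectively, and the $\bm\xi$-chirps cancel, leaving exactly $\sqrt{|\bm a|_m}(\sqrt{2\pi})^{\N}\,c(\bm\alpha_\lambda)/\overline{c(\bm\alpha_\lambda)}\,F(\bm\xi)$). However, your route is genuinely different from the paper's. The paper starts from the inner-product form $(W_{\psi}^{\bm\alpha,\lambda}f)(\bm a,\bm b)=\langle f,\mathbf{\Psi}_{\bm\alpha,\lambda,\bm a,\bm b}\rangle$, invokes the Parseval relation for the MFrFT (available from the cited reference, Theorem 3.3 of Kamalakkannan et al., so your worry that it is ``not established at this point'' is not really an obstacle) to pass to $\langle \mathfrak{F}_{\bm\alpha,\lambda}f,\mathfrak{F}_{\bm\alpha,\lambda}\mathbf{\Psi}_{\bm\alpha,\lambda,\bm a,\bm b}\rangle$, substitutes the formula for $\mathfrak{F}_{\bm\alpha,\lambda}\mathbf{\Psi}_{\bm\alpha,\lambda,\bm a,\bm b}$ from the preceding lemma, and then recognizes the remaining $\bm\xi$-integral as $\mathfrak{F}_{-\bm\alpha,\lambda}F$ evaluated at $\bm b$, so that applying $\mathfrak{F}_{\bm\alpha,\lambda}$ in $\bm b$ yields the claim. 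Your convolution-theorem route instead exploits the definition $(W_{\psi}^{\bm\alpha,\lambda}f)(\bm a,\bm b)=c(\bm\alpha_\lambda)e_{\bm\alpha,-\lambda^2}(\bm b)(\tilde f\ast\breve g)(\bm b)$ directly; it is self-contained (no appeal to the FrFT of $\mathbf{\Psi}$ or to Parseval), and it makes transparent both the chirp-cancellation mechanism and the reason the MFrWT was defined through that particular convolution. What it costs is that you must reprove in place the identification of the ordinary Fourier transforms of $\tilde f$ and $\breve g$ with chirped MFrFTs, which is essentially the content of the paper's Lemma on $\mathfrak{F}_{\bm\alpha,\lambda}\mathbf{\Psi}_{\bm\alpha,\lambda,\bm a,\bm b}$ redone in different clothing. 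Both arguments share the same analytic caveat about applying the transform to a convolution of two $L^2$ functions; your $L^1\cap L^2$ plus density remedy is the standard fix and is, if anything, more careful than the paper.
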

\begin{proof}
Here, we know 
\begin{align*}
(W_{\psi}^{\bm\alpha,\lambda}{f})(\bm{a},\bm{b})&=\left\langle f(\bm{t}),\mathbf{\Psi}_{\bm{\alpha},\lambda,\bm{a},\bm{b}}(\bm{t})\right\rangle\\ 
\end{align*}
Therefore,
\begin{align}\label{P1eqn7} \notag
(W_{\psi}^{\bm\alpha,\lambda}{f})(\bm{a},\bm{b})
&=\left\langle (\mathfrak{F}_{\bm{\alpha},\lambda}f)(\bm{\xi}),(\mathfrak{F}_{\bm{\alpha},\lambda}{\mathbf{\Psi}}_{\bm{\alpha},\lambda,\bm{a},\bm{b}})(\bm{\xi}))\right\rangle\\ \notag
&=\displaystyle\int_{\mathbb{R}^{\N}} (\mathfrak{F}_{\bm{\alpha},\lambda}f)(\bm{\xi})\overline{(\mathfrak{F}_{\bm{\alpha},\lambda}\mathbf{\Psi}_{\bm\alpha,\lambda,\bm{a},\bm{b}})(\bm{\xi})}d\bm{\xi}\\ 
&=\displaystyle\int_{\mathbb{R}^{\N}}(\mathfrak{F}_{\bm{\alpha},\lambda}f)(\bm{\xi}){c(\bm\alpha_\lambda)}\sqrt{|\bm{a}|_m} e_{\bm{\alpha},-\lambda^2}(\bm{b})e_{\bm{\alpha},-\lambda^2}(\bm{\xi})e_{\bm{\alpha},\lambda^2}(\bm{a}\bm{\xi})e^{i\lambda^2\sum_{k=1}^{\N}b_{k}\xi_{k}\csc\alpha_{k}}\overline{(\mathfrak{F}_{\bm{\alpha},\lambda}\psi)(\bm{a}\bm{\xi})}d\bm{\xi}.
\end{align}

From equation (\ref{P1eqn7}), we have
$$(W_{\psi}^{\bm\alpha,\lambda}{f})(\bm{a},\bm{b})= {c(\bm\alpha_\lambda)}\sqrt{|\bm{a}|_m} \int_{\mathbb{R}^{\N}}e_{\bm{\alpha},-\lambda^2}(\bm{b})e_{\bm{\alpha},-\lambda^2}(\bm{\xi})e^{i\lambda^2\sum_{k=1}^{\N}b_{k}\xi_{k}\csc\alpha_{k}}F(\bm{\xi})d\bm{\xi},$$
where
\begin{align}\label{P1eqn8} F(\bm{\xi})=e_{\bm{\alpha},\lambda^2}(\bm{a}\bm{\xi})(\mathfrak{F}_{\bm{\alpha},\lambda})(\bm{\xi})\overline{(\mathfrak{F}_{\bm{\alpha},\lambda}\psi)(\bm{a} \bm{\xi})}.
\end{align}
Thus, we have
\begin{align} \notag
(W_{\psi}^{\bm\alpha,\lambda}{f})(\bm{a},\bm{b})
&=\sqrt{|\bm{a}|_m}(\sqrt{2\pi})^{\N} \frac{c(\bm\alpha_\lambda)}{\overline{c(\bm\alpha_\lambda)}}\int_{\mathbb{R}^{\N}}K_{-\bm{\alpha},\lambda}\left(\bm{b},\bm{\xi}\right)F(\bm{\xi})d\bm{\xi}\\ \notag
&=\sqrt{|\bm{a}|_m}(\sqrt{2\pi})^{\N}\frac{c(\bm\alpha_\lambda)}{\overline{c(\bm\alpha_\lambda)}} \mathfrak{F}_{-\bm{\alpha},\lambda}(F(\bm{\xi}))(\bm{b}),
\end{align}
which implies that
\begin{eqnarray}{\label{p1eqn9}}
\mathfrak{F}_{\bm{\alpha},\lambda} (W_{\psi}^{\bm\alpha,\lambda}{f})(\bm{a},\bm{b})(\bm{\xi})&=&\sqrt{|\bm{a}|_m}(\sqrt{2\pi})^{\N}\frac{c(\bm\alpha_\lambda)}{\overline{c(\bm\alpha_\lambda)}} F(\bm{\xi}).
\end{eqnarray}
We get the required result by putting equation (\ref{P1eqn8}) into equation (\ref{p1eqn9}).
\end{proof}
\begin{theorem}(Inner product relation for MFrWT){\label{P1innTh}}
Let $W_{\psi}^{\bm\alpha,\lambda}{f}$ and $W_{\psi}^{\bm\alpha,\lambda}{g}$  represent the MFrWT of functions $f$ and $g$ with respect to admissible wavelet $\psi$ respectively and then 
\begin{eqnarray}\label{P1eqn10}
\displaystyle\int_{\mathbb{R}^{\N}_{0}}\displaystyle\int_{\mathbb{R}^{\N}}(W_{\psi}^{\bm\alpha,\lambda}{f})(\bm{a},\bm{b})\overline{(W_{\psi}^{\bm\alpha,\lambda}{g})(\bm{a},\bm{b})}\frac{d\bm{a}d\bm{b}}{|\bm{a}|_m^2}=C_{\bm{\alpha},\lambda}\langle f,g\rangle_{L^2(\mathbb{R}^{\N})},
\end{eqnarray}
where
\begin{eqnarray}\label{P1eqn11}
C_{\bm{\alpha},\lambda}={(2\pi)^{\N}}\displaystyle\int_{\mathbb{R}^{\N}_{0}}|(\mathfrak{F}_{\bm{\alpha},\lambda}\psi)(\bm{u})|^2\frac{d\bm{u}}{|\bm{u}|_m}.
\end{eqnarray}
\end{theorem}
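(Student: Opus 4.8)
The plan is to reduce the two-dimensional integral over $(\bm a,\bm b)$ to a Fourier-side computation using Lemma~\ref{P1lemma1}. First I would fix $\bm a\in\mathbb{R}^{\N}_0$ and apply the Parseval/Plancherel identity for the MFrFT in the $\bm b$ variable to the inner integral
\[
\int_{\mathbb{R}^{\N}}(W_{\psi}^{\bm\alpha,\lambda}f)(\bm a,\bm b)\,\overline{(W_{\psi}^{\bm\alpha,\lambda}g)(\bm a,\bm b)}\,d\bm b
=\int_{\mathbb{R}^{\N}}\mathfrak{F}_{\bm\alpha,\lambda}\bigl(W_{\psi}^{\bm\alpha,\lambda}f\bigr)(\bm a,\cdot)(\bm\xi)\,\overline{\mathfrak{F}_{\bm\alpha,\lambda}\bigl(W_{\psi}^{\bm\alpha,\lambda}g\bigr)(\bm a,\cdot)(\bm\xi)}\,d\bm\xi ,
\]
which is legitimate since the MFrFT is unitary on $L^2(\mathbb{R}^{\N})$ (being a tensor product of unitary one–dimensional FrFTs). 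Then I would substitute the formula from Lemma~\ref{P1lemma1}, namely $\mathfrak{F}_{\bm\alpha,\lambda}(W_{\psi}^{\bm\alpha,\lambda}f)(\bm a,\bm b)(\bm\xi)=\sqrt{|\bm a|_m}(\sqrt{2\pi})^{\N}\frac{c(\bm\alpha_\lambda)}{\overline{c(\bm\alpha_\lambda)}}\,e_{\bm\alpha,\lambda^2}(\bm a\bm\xi)(\mathfrak{F}_{\bm\alpha,\lambda}f)(\bm\xi)\overline{(\mathfrak{F}_{\bm\alpha,\lambda}\psi)(\bm a\bm\xi)}$, and similarly for $g$.

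The key cancellations are: the scalar prefactor $\bigl|\frac{c(\bm\alpha_\lambda)}{\overline{c(\bm\alpha_\lambda)}}\bigr|^2=1$ since numerator and denominator have equal modulus; the unimodular factor $e_{\bm\alpha,\lambda^2}(\bm a\bm\xi)\,\overline{e_{\bm\alpha,\lambda^2}(\bm a\bm\xi)}=1$; and the factor $\sqrt{|\bm a|_m}\cdot\sqrt{|\bm a|_m}=|\bm a|_m$. After these cancellations the inner integral becomes
\[
(2\pi)^{\N}|\bm a|_m\int_{\mathbb{R}^{\N}}(\mathfrak{F}_{\bm\alpha,\lambda}f)(\bm\xi)\,\overline{(\mathfrak{F}_{\bm\alpha,\lambda}g)(\bm\xi)}\,\bigl|(\mathfrak{F}_{\bm\alpha,\lambda}\psi)(\bm a\bm\xi)\bigr|^2\,d\bm\xi .
\]
Dividing by $|\bm a|_m^2$ and integrating over $\bm a\in\mathbb{R}^{\N}_0$, I would invoke Fubini's theorem to exchange the $\bm a$ and $\bm\xi$ integrals, arriving at
\[
(2\pi)^{\N}\int_{\mathbb{R}^{\N}}(\mathfrak{F}_{\bm\alpha,\lambda}f)(\bm\xi)\,\overline{(\mathfrak{F}_{\bm\alpha,\lambda}g)(\bm\xi)}\left(\int_{\mathbb{R}^{\N}_0}\bigl|(\mathfrak{F}_{\bm\alpha,\lambda}\psi)(\bm a\bm\xi)\bigr|^2\,\frac{d\bm a}{|\bm a|_m}\right)d\bm\xi .
\]
In the inner $\bm a$-integral I would substitute $\bm u=\bm a\bm\xi$ (coordinatewise), for which $\frac{d\bm a}{|\bm a|_m}=\frac{d\bm u}{|\bm u|_m}$ is scale-invariant, so the bracket equals the constant $\int_{\mathbb{R}^{\N}_0}|(\mathfrak{F}_{\bm\alpha,\lambda}\psi)(\bm u)|^2\frac{d\bm u}{|\bm u|_m}$, independent of $\bm\xi$. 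Pulling this out gives $\bigl((2\pi)^{\N}\int_{\mathbb{R}^{\N}_0}|(\mathfrak{F}_{\bm\alpha,\lambda}\psi)(\bm u)|^2\frac{d\bm u}{|\bm u|_m}\bigr)\langle\mathfrak{F}_{\bm\alpha,\lambda}f,\mathfrak{F}_{\bm\alpha,\lambda}g\rangle$, and a final application of Plancherel for the MFrFT converts the inner product back to $\langle f,g\rangle_{L^2(\mathbb{R}^{\N})}$, yielding the claimed identity with $C_{\bm\alpha,\lambda}$ as in \eqref{P1eqn11}.

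The main obstacle is justifying the interchange of integrals (Fubini) and the validity of the Plancherel step, i.e.\ ensuring everything is absolutely integrable. I would handle this first for $f,g,\psi$ in a suitable dense class (say Schwartz functions, or functions whose MFrFT is compactly supported away from the coordinate hyperplanes), where all integrals converge absolutely and the manipulations above are rigorous, and then extend to general $f,g\in L^2(\mathbb{R}^{\N})$ and admissible $\psi$ by a density/continuity argument using the admissibility condition $C_{\bm\alpha,\lambda}<\infty$, which is exactly what makes the sesquilinear form bounded. The coordinatewise substitution $\bm u=\bm a\bm\xi$ also requires $\bm\xi\in\mathbb{R}^{\N}_0$, but the complement has measure zero in the $\bm\xi$-integral, so this is harmless; I would remark on this explicitly.
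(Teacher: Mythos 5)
Your proposal is correct and follows essentially the same route as the paper: Plancherel in the $\bm b$ variable, substitution of the formula from Lemma~\ref{P1lemma1}, cancellation of the unimodular and $\sqrt{|\bm a|_m}$ factors, Fubini, the scale-invariant substitution $\bm u=\bm a\bm\xi$, and a final Plancherel step. You are in fact somewhat more careful than the paper about the substitution $\bm u = \bm a\bm\xi$ requiring $\bm\xi\in\mathbb{R}^{\N}_0$ and about justifying Fubini via a density argument, which the paper only asserts in passing.
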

\begin{proof}
Treating $(W_{\psi}^{\bm\alpha,\lambda}{f})(\bm{a},\bm{b})$, $(W_{\psi}^{\bm\alpha,\lambda}{g})(\bm{a},\bm{b})$ as $L^2(\mathbb{R}^{\N})$ are functions of some variable $\bm{b}$ and using the Parseval’s theorem
for the MFrFT  (\cite{kamalakkannan2020multidimensional}, Theorem 3.3) we have
\begin{align}\label{P1eqn12}
\displaystyle\int_{\mathbb{R}^{\N}_{0}}\displaystyle\int_{\mathbb{R}^{\N}}(W_{\psi}^{\bm\alpha,\lambda}{f})(\bm{a},\bm{b})\overline{(W_{\psi}^{\bm\alpha,\lambda}{g})(\bm{a},\bm{b})}\frac{d\bm{a}d\bm{b}}{|\bm{a}|_m^2}=\displaystyle\int_{\mathbb{R}^{\N}_{0}}\left(\displaystyle\int_{\mathbb{R}^{\N}}\mathfrak{F}_{\bm{\alpha},\lambda}( (W_{\psi}^{\bm\alpha,\lambda}{f})(\bm{a},\bm{b}))(\bm{\xi})\overline{\mathfrak{F}_{\bm{\alpha},\lambda}(( W_{\psi}^{\bm\alpha,\lambda}{g})(\bm{a},\bm{b}))(\bm{\xi})}d\bm{\xi}\right)\frac{d\bm{a}}{|\bm{a}|_m^2}.
\end{align}
Now, using equation (\ref{p1eqn9}) in equation (\ref{P1eqn12}), we get
\begin{align*}
&\displaystyle\int_{\mathbb{R}^{\N}_{0}}\displaystyle\int_{\mathbb{R}^{\N}}(W_{\psi}^{\bm\alpha,\lambda}{f})(\bm{a},\bm{b})\overline{(W_{\psi}^{\bm\alpha,\lambda}{g})(\bm{a},\bm{b})}\frac{d\bm{a}d\bm{b}}{|\bm{a}|_m^2}\\&=\displaystyle\int_{\mathbb{R}^{\N}_{0}}\left(\int_{\mathbb{R}^{\N}}\sqrt{|\bm{a}|_m}{(\sqrt{2\pi})^{\N}}e_{\bm{\alpha},\lambda^2}(\bm{a}\bm{\xi})\overline{(\mathfrak{F}_{\bm{\alpha},\lambda}\psi)(\bm{a} \bm{\xi})}(\mathfrak{F}_{\bm{\alpha},\lambda} f)(\bm{\xi})\overline{\sqrt{|\bm{a}}|_m(\sqrt{2\pi})^Ne_{\bm{\alpha},\lambda^2}(\bm{a}\bm{\xi})\overline{(\mathfrak{F}_{\bm{\alpha},\lambda}\psi)(\bm{a} \bm{\xi})}(\mathfrak{F}_{\bm{\alpha},\lambda} g)(\bm{\xi})}d\bm{\xi}\right)\frac{d\bm{a}}{|\bm{a}|_m^2}\\
&=(2\pi)^{\N}\displaystyle\int_{\mathbb{R}^{\N}_{0}}\displaystyle\int_{\mathbb{R}^{\N}}\overline{(\mathfrak{F}_{\bm{\alpha},\lambda}\psi)(\bm{a} \bm{\xi})}(\mathfrak{F}_{\bm{\alpha},\lambda}\psi)(\bm{a} \bm{\xi})(\mathfrak{F}_{\bm{\alpha},\lambda} f)(\bm{\xi})\overline{(\mathfrak{F}_{\bm{\alpha},\lambda} g)(\bm{\xi})}d\bm{\xi}\frac{d\bm{a}}{|\bm{a}|_m}\\
&=\int_{\mathbb{R}^{\N}}(\mathfrak{F}_{\bm{\alpha},\lambda}f)(\bm{\xi})\overline{(\mathfrak{F}_{\bm{\alpha},\lambda}g)(\bm{\xi})}\left(\displaystyle\int_{\mathbb{R}^{\N}_{0}}(2\pi)^{\N}(\mathfrak{F}_{\bm{\alpha},\lambda}\psi)(\bm{a} \bm{\xi})\overline{(\mathfrak{F}_{\bm{\alpha},\lambda}\psi)(\bm{a} \bm{\xi})}\frac{d\bm{a}}{|\bm{a}|_m}\right)d\bm{\xi}.
\end{align*}
Thus we have
\begin{align*}
\displaystyle\int_{\mathbb{R}^{\N}_{0}}\displaystyle\int_{\mathbb{R}^{\N}}(W_{\psi}^{\bm\alpha,\lambda}{f})(\bm{a},\bm{b})\overline{(W_{\psi}^{\bm\alpha,\lambda}{g})(\bm{a},\bm{b})}\frac{d\bm{a}d\bm{b}}{|\bm{a}|_m^2}&=\displaystyle\int_{\mathbb{R}^{\N}}C_{\bm{\alpha},\lambda}(\mathfrak{F}_{\bm{\alpha},\lambda}f)(\bm{\xi})\overline{(\mathfrak{F}_{\bm{\alpha},\lambda}g)(\bm{\xi})}d\bm{\xi}\\
&=C_{\bm{\alpha},\lambda}\left\langle(\mathfrak{F}_{\bm{\alpha},\lambda}f)(\bm{\xi}),(\mathfrak{F}_{\bm{\alpha},\lambda}g)(\bm{\xi})\right\rangle_{L^2(\mathbb{R}^{\N})}. 
\end{align*}   
Again, applying Parseval's inequality (\cite{kamalakkannan2020multidimensional}, Theorem 3.3), we get the required result. Fubini's theorem has been used to justify changing the order of integration.
\end{proof}
\begin{corollary}\label{corofinnthe}
Suppose $\psi$ be an admissible wavelet and for any arbitrary function $f\in L^2(\mathbb{R}^{\N})$. Then
\begin{eqnarray*}
\int_{\mathbb{R}^{\N}_{0}}\int_{\mathbb{R}^{\N}}(W_{\psi}^{\bm\alpha,\lambda}{f})(\bm{a},\bm{b})\overline{(W_{\psi}^{\bm\alpha,\lambda}{f})(\bm{a},\bm{b})}\frac{d\bm{a}d\bm{b}}{|\bm{a}|_m^2}=C_{\bm{\alpha},\lambda}\|f\|^2_{L^2(\mathbb{R}^{\N})}.
\end{eqnarray*}
\end{corollary}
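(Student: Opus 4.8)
The plan is simply to specialize the inner product relation for the MFrWT, Theorem~\ref{P1innTh}, to the diagonal case $g=f$. Setting $g=f$ in equation~\eqref{P1eqn10}, the left-hand side becomes exactly
\[
\int_{\mathbb{R}^{\N}_{0}}\int_{\mathbb{R}^{\N}}(W_{\psi}^{\bm\alpha,\lambda}f)(\bm{a},\bm{b})\overline{(W_{\psi}^{\bm\alpha,\lambda}f)(\bm{a},\bm{b})}\,\frac{d\bm{a}\,d\bm{b}}{|\bm{a}|_m^2},
\]
while the right-hand side becomes $C_{\bm{\alpha},\lambda}\langle f,f\rangle_{L^2(\mathbb{R}^{\N})}$. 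Since $\langle f,f\rangle_{L^2(\mathbb{R}^{\N})}=\|f\|^2_{L^2(\mathbb{R}^{\N})}$ by the definition of the $L^2$-norm, the claimed identity follows at once.

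The only point that deserves a remark is that every integral appearing is well defined: the admissibility hypothesis on $\psi$ forces $C_{\bm{\alpha},\lambda}$, given by \eqref{P1eqn11}, to be finite, and then Theorem~\ref{P1innTh} (applied with $g=f$) shows in particular that $W_{\psi}^{\bm\alpha,\lambda}f$ belongs to $L^2\!\left(\mathbb{R}^{\N}_{0}\times\mathbb{R}^{\N},\,|\bm{a}|_m^{-2}\,d\bm{a}\,d\bm{b}\right)$ with squared norm $C_{\bm{\alpha},\lambda}\|f\|^2_{L^2(\mathbb{R}^{\N})}$. Consequently there is no genuine obstacle here; the corollary is an immediate consequence of the already-established inner product relation, requiring no new change of variables, estimate, or appeal to Fubini's theorem beyond those already used in the proof of Theorem~\ref{P1innTh}.
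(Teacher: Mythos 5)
Your proposal is correct and matches the paper's intent exactly: the corollary is stated in the paper without proof precisely because it is the immediate specialization $g=f$ of the inner product relation in Theorem~\ref{P1innTh}, which is what you do. Your additional remark on finiteness of $C_{\bm{\alpha},\lambda}$ under the admissibility hypothesis is a reasonable extra observation but not a departure from the paper's route.
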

\begin{theorem}(Reconstruction formula)
Suppose there exist an arbitrary function $f\in L^2(\mathbb{R}^{\N})$ and an wavelet admissible function $\psi.$  Also let $C_{\bm{\alpha},\lambda}\neq 0 $ as defined in equation (\ref{P1eqn11}). The reconstruction formula for $f$ is then as follows:
\begin{eqnarray*}
f(\bm{t})=\frac{1}{C_{\bm{\alpha},\lambda}}\int_{\mathbb{R}^{\N}_{0}}\int_{\mathbb{R}^{\N}}(W_{\psi}^{\bm\alpha,\lambda}{f})(\bm{a},\bm{b})\mathbf{\Psi}_{\bm{\alpha},\lambda,\bm{a},\bm{b}}(\bm{t})\frac{d\bm{a}d\bm{b}}{|\bm{a}|_m^2}.
\end{eqnarray*}
\end{theorem}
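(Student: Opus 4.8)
The plan is to interpret the reconstruction formula in the weak ($L^2$) sense and to reduce it to the inner product relation of Theorem~\ref{P1innTh}. Concretely, I would fix an arbitrary $g\in L^2(\mathbb{R}^{\N})$, pair the right-hand side of the claimed identity against $g$, and show the pairing equals $\langle f,g\rangle_{L^2(\mathbb{R}^{\N})}$; since $g$ is arbitrary and $C_{\bm{\alpha},\lambda}\neq 0$, this identifies the vector-valued integral $\frac{1}{C_{\bm{\alpha},\lambda}}\int_{\mathbb{R}^{\N}_{0}}\int_{\mathbb{R}^{\N}}(W_{\psi}^{\bm\alpha,\lambda}f)(\bm a,\bm b)\,\mathbf{\Psi}_{\bm{\alpha},\lambda,\bm a,\bm b}\,\frac{d\bm a\,d\bm b}{|\bm a|_m^{2}}$ with $f$, which is exactly the assertion (the pointwise statement $f(\bm t)=\cdots$ then being read in this $L^2$ sense).

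For the computation I would first move the $\bm t$-integration inside (justification deferred to the last paragraph), obtaining
\[
\Big\langle \tfrac{1}{C_{\bm{\alpha},\lambda}}\!\int_{\mathbb{R}^{\N}_{0}}\!\int_{\mathbb{R}^{\N}}(W_{\psi}^{\bm\alpha,\lambda}f)(\bm a,\bm b)\,\mathbf{\Psi}_{\bm{\alpha},\lambda,\bm a,\bm b}\,\tfrac{d\bm a\,d\bm b}{|\bm a|_m^{2}},\ g\Big\rangle = \tfrac{1}{C_{\bm{\alpha},\lambda}}\!\int_{\mathbb{R}^{\N}_{0}}\!\int_{\mathbb{R}^{\N}}(W_{\psi}^{\bm\alpha,\lambda}f)(\bm a,\bm b)\,\big\langle \mathbf{\Psi}_{\bm{\alpha},\lambda,\bm a,\bm b},g\big\rangle\,\tfrac{d\bm a\,d\bm b}{|\bm a|_m^{2}}.
\]
Then I would use the defining identity $(W_{\psi}^{\bm\alpha,\lambda}g)(\bm a,\bm b)=\langle g,\mathbf{\Psi}_{\bm{\alpha},\lambda,\bm a,\bm b}\rangle$ to write $\langle \mathbf{\Psi}_{\bm{\alpha},\lambda,\bm a,\bm b},g\rangle=\overline{(W_{\psi}^{\bm\alpha,\lambda}g)(\bm a,\bm b)}$, so the right-hand side becomes $\frac{1}{C_{\bm{\alpha},\lambda}}\int_{\mathbb{R}^{\N}_{0}}\int_{\mathbb{R}^{\N}}(W_{\psi}^{\bm\alpha,\lambda}f)(\bm a,\bm b)\,\overline{(W_{\psi}^{\bm\alpha,\lambda}g)(\bm a,\bm b)}\,\frac{d\bm a\,d\bm b}{|\bm a|_m^{2}}$. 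By Theorem~\ref{P1innTh} this equals $\frac{1}{C_{\bm{\alpha},\lambda}}\,C_{\bm{\alpha},\lambda}\,\langle f,g\rangle_{L^2(\mathbb{R}^{\N})}=\langle f,g\rangle_{L^2(\mathbb{R}^{\N})}$, which completes the reduction.

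The step that requires care — and the only genuinely analytic point — is the interchange of the $\bm t$-integration with the $(\bm a,\bm b)$-integration, i.e.\ the absolute convergence of the iterated integral $\int_{\mathbb{R}^{\N}_{0}}\int_{\mathbb{R}^{\N}}(W_{\psi}^{\bm\alpha,\lambda}f)(\bm a,\bm b)\,\langle \mathbf{\Psi}_{\bm{\alpha},\lambda,\bm a,\bm b},g\rangle\,\frac{d\bm a\,d\bm b}{|\bm a|_m^{2}}$; note that $W_{\psi}^{\bm\alpha,\lambda}f$ need not be integrable against the (infinite) measure $\frac{d\bm a\,d\bm b}{|\bm a|_m^{2}}$, so a crude $L^1$ estimate is unavailable, and one really must exploit that the inner $\bm t$-integral is itself $\overline{(W_{\psi}^{\bm\alpha,\lambda}g)(\bm a,\bm b)}$. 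I would then invoke the Cauchy–Schwarz inequality in $L^2\!\big(\mathbb{R}^{\N}_{0}\times\mathbb{R}^{\N},\frac{d\bm a\,d\bm b}{|\bm a|_m^{2}}\big)$:
\[
\int_{\mathbb{R}^{\N}_{0}}\!\int_{\mathbb{R}^{\N}}\big|(W_{\psi}^{\bm\alpha,\lambda}f)(\bm a,\bm b)\big|\,\big|(W_{\psi}^{\bm\alpha,\lambda}g)(\bm a,\bm b)\big|\,\tfrac{d\bm a\,d\bm b}{|\bm a|_m^{2}}\le \big\|W_{\psi}^{\bm\alpha,\lambda}f\big\|\,\big\|W_{\psi}^{\bm\alpha,\lambda}g\big\| = C_{\bm{\alpha},\lambda}\,\|f\|_{L^2(\mathbb{R}^{\N})}\,\|g\|_{L^2(\mathbb{R}^{\N})}<\infty,
\]
where the two norms are those of the weighted space and the last equality is Corollary~\ref{corofinnthe}. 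Packaged cleanly: the map $g\mapsto \frac{1}{C_{\bm{\alpha},\lambda}}\langle W_{\psi}^{\bm\alpha,\lambda}f,\,W_{\psi}^{\bm\alpha,\lambda}g\rangle$ is a well-defined bounded conjugate-linear functional on $L^2(\mathbb{R}^{\N})$, it equals $\langle f,\cdot\rangle_{L^2(\mathbb{R}^{\N})}$ by Theorem~\ref{P1innTh}, and its Riesz representative is by construction the weak integral in the statement; hence that integral is $f$, and the reconstruction formula holds in this weak sense.
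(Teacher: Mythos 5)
Your proposal is correct and follows essentially the same route as the paper: pair the candidate reconstruction against an arbitrary $g\in L^2(\mathbb{R}^{\N})$, interchange the $\bm t$- and $(\bm a,\bm b)$-integrations to recognize the pairing as $\frac{1}{C_{\bm{\alpha},\lambda}}\langle W_{\psi}^{\bm\alpha,\lambda}f,W_{\psi}^{\bm\alpha,\lambda}g\rangle$, and invoke Theorem~\ref{P1innTh}. The only difference is that you explicitly justify the interchange via Cauchy--Schwarz in the weighted space together with Corollary~\ref{corofinnthe}, a point the paper passes over silently; this is a welcome refinement rather than a different argument.
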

\begin{proof}
We have
\begin{align*}
\left\langle W_{\psi}^{\bm\alpha,\lambda}{f}(\bm{a},\bm{b}),W_{\psi}^{\bm\alpha,\lambda}{g}(\bm{a},\bm{b})\right\rangle_{L^2(\mathbb{R}^{\N}_0\times \mathbb{R}^{\N})}
&=\int_{\mathbb{R}^{\N}_{0}}\int_{\mathbb{R}^{\N}} (W_{\psi}^{\bm\alpha,\lambda}{f})(\bm{a},\bm{b})\overline{(W_{\psi}^{\bm\alpha,\lambda}{g}(\bm{a},\bm{b}))}\frac{d\bm{a}d\bm{b}}{|\bm{a}|_m^2}\\
&=\int_{\mathbb{R}^{\N}_{0}}\int_{\mathbb{R}^{\N}} (W_{\psi}^{\bm\alpha,\lambda}{f})(\bm{a},\bm{b})\overline{\int_{\mathbb{R}^{\N}}g(\bm{t})\overline{\mathbf{\Psi}_{\bm{\alpha},\lambda,\bm{a},\bm{b}}(\bm{t})}d\bm{t}}\frac{d\bm{a}d\bm{b}}{|\bm{a}|_m^2}\\
&=\int_{\mathbb{R}^{\N}_{0}}\int_{\mathbb{R}^{\N}} (W_{\psi}^{\bm\alpha,\lambda}{f})(\bm{a},\bm{b})\int_{\mathbb{R}^{\N}}\overline{g(\bm{t})}\mathbf{\Psi}_{\bm{\alpha},\lambda,\bm{a},\bm{b}}(\bm{t})d\bm{t}\frac{d\bm{a}d\bm{b}}{|\bm{a}|_m^2}\\
&=\int_{\mathbb{R}^{\N}}\overline{g(\bm{t})}\int_{\mathbb{R}^{\N}_{0}}\int_{\mathbb{R}^{\N}}( W_{\psi}^{\bm\alpha,\lambda}{f})(\bm{a},\bm{b})\mathbf{\Psi}_{\bm{\alpha},\lambda,\bm{a},\bm{b}}(\bm{t})\frac{d\bm{a}d\bm{b}}{|\bm{a}|_m^2} d\bm{t}\\
&=\left\langle\int_{\mathbb{R}^{\N}_{0}}\int_{\mathbb{R}^{\N}} (W_{\psi}^{\bm\alpha,\lambda}{f})(\bm{a},\bm{b})\mathbf{\Psi}_{\bm{\alpha},\lambda,\bm{a},\bm{b}}(\bm{t})\frac{d\bm{a}d\bm{b}}{|\bm{a}|_m^2},g(\bm{t})\right\rangle_{L^2(\mathbb{R}^{\N})}.
\end{align*}
Using theorem (\ref{P1innTh})
\begin{eqnarray*}
\langle f,g\rangle_{L^2(\mathbb{R}^{\N})} &=& \frac{1}{C_{\bm\alpha,\lambda}}\left\langle(W_{\psi}^{\bm\alpha,\lambda}{f}),(W_{\psi}^{\bm\alpha,\lambda}{g})\right\rangle_{L^2(\mathbb{R}^{\N}_0\times \mathbb{R}^{\N})}\\
&=&\frac{1}{C_{\bm\alpha,\lambda}}\left\langle\int_{\mathbb{R}^{\N}_{0}}\int_{\mathbb{R}^{\N}} (W_{\psi}^{\bm\alpha,\lambda}{f})(\bm{a},\bm{b})\mathbf{\Psi}_{\bm{\alpha},\lambda,\bm{a},\bm{b}}(\bm{t})\frac{d\bm{a}d\bm{b}}{|\bm{a}|_m^2},g(\bm{t})\right\rangle_{L^2(\mathbb{R}^{\N})}\\
\Rightarrow \quad f(\bm{t})&=&\frac{1}{C_{\bm\alpha,\lambda}}\int_{\mathbb{R}^{\N}_{0}}\int_{\mathbb{R}^{\N}}(W_{\psi}^{\bm\alpha,\lambda}{f})(\bm{a},\bm{b})\mathbf{\Psi}_{\bm{\alpha},\lambda,\bm{a},\bm{b}}(\bm{t})\frac{d\bm{a}d\bm{b}}{|\bm{a}|_m^2}.
\end{eqnarray*}
Hence, the theorem follows.
\end{proof}
Now, we will derive a theorem which gives reproducing kernel for the range of the MFrWT.
\begin{theorem}(Reproducing kernel for the range of MFrWT)
Let $\psi$ be a wavelet. Also let $C_{\alpha,\lambda}\neq 0$ as defined in equation (\ref{P1eqn11}) and $(\bm{a}_0 , \bm{b}_0) \in \mathbb{R}^{\N}\times \mathbb{R}^{\N}_0$. Then, $\F\in L^2\left(\mathbb{R}^{\N}\times \mathbb{R}^{\N}_0, \frac{d\bm{a}d\bm{b}}{|\bm{a}|_m^2}\right)$ be the MFrWT of some $f \in L^2(\mathbb{R}^{\N})$ iff
\begin{eqnarray*}
\F(\bm{a}_0,\bm{b}_0)=\int_{\mathbb{R}^{\N}_0}\int_{\mathbb{R}^{\N}}
\F(\bm{a},\bm{b})K_{\bm{\alpha},\lambda,\mathbf{\Psi}}(\bm{a}_0,\bm{b}_0;\bm{a},\bm{b})\frac{d\bm{a}d\bm{b}}{|\bm{a}|_m^2},
\end{eqnarray*}
where $K_{\bm{\alpha},\lambda,\mathbf{\Psi}}(\bm{a}_0,\bm{b}_0:\bm{a},\bm{b})$ is the reproducing kernel which satisfies,
\begin{eqnarray*}
K_{\bm{\alpha},\lambda,\mathbf{\Psi}}(\bm{a}_0,\bm{b}_0;\bm{a},\bm{b})=\frac{1}{C_{\alpha,\lambda}}\int_{\mathbb{R}^{\N}}\mathbf{\Psi}_{\bm{\alpha},\lambda,\bm{a},\bm{b}}(\bm{t})\overline{\mathbf{\Psi}_{\bm{\alpha},\lambda,\bm{a}_0,\bm{b}_0}(\bm{t})} d\bm{t}.
\end{eqnarray*}
\end{theorem}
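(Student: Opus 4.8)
The plan is to deduce both implications from the reconstruction formula combined with the inner product relation of Theorem \ref{P1innTh}, following the classical template for continuous wavelet transforms. Throughout I abbreviate $\Psi_{\bm{a},\bm{b}}:=\mathbf{\Psi}_{\bm{\alpha},\lambda,\bm{a},\bm{b}}\in L^2(\mathbb{R}^{\N})$ and use that $(W_{\psi}^{\bm\alpha,\lambda}h)(\bm{a},\bm{b})=\langle h,\Psi_{\bm{a},\bm{b}}\rangle_{L^2(\mathbb{R}^{\N})}$.

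\emph{Necessity.} Suppose $\F=W_{\psi}^{\bm\alpha,\lambda}f$ for some $f\in L^2(\mathbb{R}^{\N})$; then $\F\in L^2\big(\mathbb{R}^{\N}\times\mathbb{R}^{\N}_0,\,d\bm{a}\,d\bm{b}/|\bm{a}|_m^2\big)$ by Corollary \ref{corofinnthe}. Evaluating at $(\bm{a}_0,\bm{b}_0)$, substituting the reconstruction formula for $f$ into $\F(\bm{a}_0,\bm{b}_0)=\langle f,\Psi_{\bm{a}_0,\bm{b}_0}\rangle$, and then pulling the scalar integral outside the inner product, one gets
\[
\F(\bm{a}_0,\bm{b}_0)=\frac{1}{C_{\bm{\alpha},\lambda}}\int_{\mathbb{R}^{\N}_0}\int_{\mathbb{R}^{\N}}\F(\bm{a},\bm{b})\,\langle\Psi_{\bm{a},\bm{b}},\Psi_{\bm{a}_0,\bm{b}_0}\rangle\,\frac{d\bm{a}\,d\bm{b}}{|\bm{a}|_m^2},
\]
which is the claimed identity once one writes $\langle\Psi_{\bm{a},\bm{b}},\Psi_{\bm{a}_0,\bm{b}_0}\rangle=\int_{\mathbb{R}^{\N}}\Psi_{\bm{a},\bm{b}}(\bm{t})\overline{\Psi_{\bm{a}_0,\bm{b}_0}(\bm{t})}\,d\bm{t}$. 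The interchange of the $\bm{t}$-integration with the $(\bm{a},\bm{b})$-integration is legitimate by Fubini's theorem, since $(\bm{a},\bm{b})\mapsto\F(\bm{a},\bm{b})\langle\Psi_{\bm{a},\bm{b}},\Psi_{\bm{a}_0,\bm{b}_0}\rangle$ is absolutely integrable against $d\bm{a}\,d\bm{b}/|\bm{a}|_m^2$ by Cauchy--Schwarz together with Corollary \ref{corofinnthe}.

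\emph{Sufficiency.} Conversely, assume $\F\in L^2\big(\mathbb{R}^{\N}\times\mathbb{R}^{\N}_0,\,d\bm{a}\,d\bm{b}/|\bm{a}|_m^2\big)$ obeys the reproducing identity. Define $f$ by the weak integral $f=\frac{1}{C_{\bm{\alpha},\lambda}}\int_{\mathbb{R}^{\N}_0}\int_{\mathbb{R}^{\N}}\F(\bm{a},\bm{b})\,\Psi_{\bm{a},\bm{b}}\,d\bm{a}\,d\bm{b}/|\bm{a}|_m^2$, that is, $f$ is the element of $L^2(\mathbb{R}^{\N})$ determined by $\langle f,h\rangle=\frac{1}{C_{\bm{\alpha},\lambda}}\int_{\mathbb{R}^{\N}_0}\int_{\mathbb{R}^{\N}}\F(\bm{a},\bm{b})\overline{(W_{\psi}^{\bm\alpha,\lambda}h)(\bm{a},\bm{b})}\,d\bm{a}\,d\bm{b}/|\bm{a}|_m^2$ for all $h\in L^2(\mathbb{R}^{\N})$; this conjugate-linear functional of $h$ is bounded (with $\|f\|_{L^2}\le C_{\bm{\alpha},\lambda}^{-1/2}\|\F\|$) by Cauchy--Schwarz and Corollary \ref{corofinnthe}, so $f$ is well defined. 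Taking $h=\Psi_{\bm{a}_0,\bm{b}_0}$ yields
\[
(W_{\psi}^{\bm\alpha,\lambda}f)(\bm{a}_0,\bm{b}_0)=\langle f,\Psi_{\bm{a}_0,\bm{b}_0}\rangle=\frac{1}{C_{\bm{\alpha},\lambda}}\int_{\mathbb{R}^{\N}_0}\int_{\mathbb{R}^{\N}}\F(\bm{a},\bm{b})\,\langle\Psi_{\bm{a},\bm{b}},\Psi_{\bm{a}_0,\bm{b}_0}\rangle\,\frac{d\bm{a}\,d\bm{b}}{|\bm{a}|_m^2}=\F(\bm{a}_0,\bm{b}_0),
\]
the last step being the hypothesis, so $\F$ is the MFrWT of $f$. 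Finally, the kernel lies in the range space: since $\langle\Psi_{\bm{a},\bm{b}},\Psi_{\bm{a}_0,\bm{b}_0}\rangle=\overline{(W_{\psi}^{\bm\alpha,\lambda}\Psi_{\bm{a}_0,\bm{b}_0})(\bm{a},\bm{b})}$ and $\Psi_{\bm{a}_0,\bm{b}_0}\in L^2(\mathbb{R}^{\N})$, Corollary \ref{corofinnthe} gives $K_{\bm{\alpha},\lambda,\mathbf{\Psi}}(\bm{a}_0,\bm{b}_0;\cdot,\cdot)\in L^2\big(\mathbb{R}^{\N}\times\mathbb{R}^{\N}_0,\,d\bm{a}\,d\bm{b}/|\bm{a}|_m^2\big)$, so the displayed relation is a reproducing-kernel identity in the sense of Section~2.

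\emph{Main obstacle.} The only non-formal points are the two applications of Fubini's theorem and the verification that the weak integral defining $f$ genuinely produces an $L^2(\mathbb{R}^{\N})$ function; both reduce to the Cauchy--Schwarz estimate controlled by the energy identity $\|W_{\psi}^{\bm\alpha,\lambda}h\|^2=C_{\bm{\alpha},\lambda}\|h\|_{L^2(\mathbb{R}^{\N})}^2$ of Corollary \ref{corofinnthe}.
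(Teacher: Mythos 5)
Your proof is correct, and the necessity direction is essentially the paper's argument: evaluate $\F(\bm{a}_0,\bm{b}_0)=\langle f,\mathbf{\Psi}_{\bm{\alpha},\lambda,\bm{a}_0,\bm{b}_0}\rangle$, substitute the reconstruction formula for $f$, and interchange the order of integration to expose the kernel $K_{\bm{\alpha},\lambda,\mathbf{\Psi}}$. Where you genuinely go beyond the paper is the converse: the paper merely names the candidate $f=\frac{1}{C_{\bm{\alpha},\lambda}}\int\int \F(\bm{a},\bm{b})\mathbf{\Psi}_{\bm{\alpha},\lambda,\bm{a},\bm{b}}\,d\bm{a}\,d\bm{b}/|\bm{a}|_m^2$ and stops, whereas you (i) justify that this weak integral defines an element of $L^2(\mathbb{R}^{\N})$ via Riesz representation and the energy identity of Corollary \ref{corofinnthe}, and (ii) actually verify $(W_{\psi}^{\bm\alpha,\lambda}f)(\bm{a}_0,\bm{b}_0)=\F(\bm{a}_0,\bm{b}_0)$ by testing against $h=\mathbf{\Psi}_{\bm{a}_0,\bm{b}_0}$ and invoking the hypothesized reproducing identity. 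That is the step the paper omits, and without it the ``if'' half of the equivalence is unproved, so your version is the more complete one. One small remark: the Fubini justification in your necessity step is the weakest link (absolute integrability of the triple integral is not quite what Cauchy--Schwarz on the $(\bm{a},\bm{b})$-integral gives you); you can sidestep it entirely by applying the inner product relation of Theorem \ref{P1innTh} with $g=\mathbf{\Psi}_{\bm{\alpha},\lambda,\bm{a}_0,\bm{b}_0}$, which yields $\F(\bm{a}_0,\bm{b}_0)=\frac{1}{C_{\bm{\alpha},\lambda}}\langle W_{\psi}^{\bm\alpha,\lambda}f,\,W_{\psi}^{\bm\alpha,\lambda}\mathbf{\Psi}_{\bm{\alpha},\lambda,\bm{a}_0,\bm{b}_0}\rangle$ and hence the kernel identity with no interchange of integrals at all.
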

\begin{proof}
Suppose for an arbitrary function $f \in L^2(\mathbb{R}^{\N})$ be such that $W_{\psi}^{\bm\alpha,\lambda}{f}=\F$, then we can write,
\begin{eqnarray*}
\F(\bm{a}_0, \bm{b}_0)&=&(W_{\psi}^{\bm\alpha,\lambda}{f})(\bm{a}_0, \bm{b}_0)\\
&=&\int_{\mathbb{R}^{\N}}f(\bm{t})\overline{\mathbf{\Psi}_{\bm{\alpha},\lambda,\bm{a}_0,\bm{b}_0}(\bm{t})} d\bm{t}\\
&=&\frac{1}{C_{\bm\alpha,\lambda}}\int_{\mathbb{R}^{\N}}\int_{\mathbb{R}^{\N}_{0}}\int_{\mathbb{R}^{\N}}(W_{\psi}^{\bm\alpha,\lambda}{f})(\bm{a},\bm{b})\mathbf{\Psi}_{\bm{\alpha},\lambda,\bm{a},\bm{b}}(\bm{t})\frac{d\bm{a}d\bm{b}}{|\bm{a}|_m^2}\overline{\mathbf{\Psi}_{\bm{\alpha},\lambda,\bm{a}_0,\bm{b}_0}(\bm{t})} d\bm{t}\\
&=&\int_{\mathbb{R}^{\N}_{0}}\int_{\mathbb{R}^{\N}}(W_{\psi}^{\bm\alpha,\lambda}{f})(\bm{a},\bm{b})\left(\frac{1}{C_{\bm\alpha,\lambda}}\int_{\mathbb{R}^{\N}}\mathbf{\Psi}_{\bm{\alpha},\lambda,\bm{a},\bm{b}}(\bm{t})\overline{\mathbf{\Psi}_{\bm{\alpha},\lambda,\bm{a}_0,\bm{b}_0}(\bm{t})} d\bm{t}\right)\frac{d\bm{a}d\bm{b}}{|\bm{a}|_m^2}.
\end{eqnarray*}
Hence,
\begin{eqnarray*}
\F(\bm{a}_0,\bm{b}_0)=\int_{\mathbb{R}^{\N}_0}\int_{\mathbb{R}^{\N}}
\F(\bm{a},\bm{b})K_{\bm{\alpha},\lambda,\mathbf{\Psi}}(\bm{a}_0,\bm{b}_0;\bm{a},\bm{b})\frac{d\bm{a}d\bm{b}}{|\bm{a}|_m^2},
\end{eqnarray*}
where
$\displaystyle K_{\bm{\alpha},\lambda,\mathbf{\Psi}}(\bm{a}_0,\bm{b}_0;\bm{a},\bm{b})=\frac{1}{C_{\bm\alpha,\lambda}}\displaystyle\int_{\mathbb{R}^{\N}}\mathbf{\Psi}_{\bm{\alpha},\lambda,\bm{a},\bm{b}}(\bm{t})\overline{\mathbf{\Psi}_{\bm{\alpha},\lambda,\bm{a}_0,\bm{b}_0}(\bm{t})}  d\bm{t}$.\\
On the other way round, Let's assume that the context holds only for the provided $\F\in L^2\left(\mathbb{R}^{\N}\times \mathbb{R}^{\N}_0, \frac{d\bm{a}d\bm{b}}{|\bm{a}|_m^2}\right)$ then required $f$ is determined by
\begin{align*}
\displaystyle\frac{1}{C_{\bm{\alpha},\lambda}}\int_{\mathbb{R}^{\N}_0}\int_{\mathbb{R}^{\N}}\F(\bm{a},\bm{b})\mathbf{\Psi}_{\bm{\alpha},\lambda,\bm{a},\bm{b}}(\bm{t})\frac{d\bm{a}d\bm{b}}{|\bm{a}|_m^2}.
 \end{align*}
This completes the proof.
\end{proof}

\section{Heisenberg's Uncertainty Inequality for MFrWT}
The uncertainty principle, first proposed by a German physicist named Werner Heisenberg in 1927, states that it is impossible to measure position and momentum at any arbitrarily well localised state at the same time. That is, measuring position without disrupting momentum is impossible, and vice versa. \cite{busch2007heisenberg}. Similarly, in terms of time and frequency, it is impossible to measure time and frequency at any state simultaneously. Here we will derive Heisenberg's uncertainty inequality for MFrFT.
\subsection{Relation between Classical and Fractional Fourier Transform}
Now we derive a relationship between classical FT and MFrFT which will further help us to prove Heisenberg's uncertainity inequality for MFrFT
\begin{align*}
(\mathfrak{F}_{\bm\alpha,\lambda}f)(\bm{\xi})=\int_{\mathbb{R}^{\N}}f(\bm{t})K_{\bm\alpha,\lambda}(\bm{t},\bm{\xi})d\bm{t},
\end{align*}
here $\bm\alpha=(\alpha_1,\alpha_2,\alpha_3,\cdots,\alpha_{\N})$ and $\bm{\xi}=(\xi_1,\xi_2,\xi_3,\cdots,\xi_{\N}).$\\
We have
\begin{eqnarray*}
(\mathfrak{F}_{\bm\alpha,\lambda}f)(\bm{\xi})&=&\int_{\mathbb{R}^{\N}}f(\bm{t})\frac{c(\bm\alpha_{\lambda})}{{(\sqrt{2\pi})^{\N}}}e_{\bm\alpha,\lambda^2}(\bm{\xi})e_{\bm\alpha,\lambda^2}(\bm{t})e^{-i\lambda^2\sum_{k=1}^{\N}t_{k}\xi_{k}\csc\alpha_{k}}d\bm{t}\\
&=&\frac{c(\bm\alpha_{\lambda})}{(\sqrt{2\pi})^{\N}}e_{\bm\alpha,\lambda^2}(\bm{\xi})\int_{\mathbb{R}^{\N}}f(\bm{t})e^{i\lambda^2\sum_{k=1}^{\N}a(\alpha_k){t_{k}}^2}e^{-i\lambda^2\sum_{k=1}^{\N}t_{k}\xi_{k}\csc\alpha_{k}}d\bm{t}.
\end{eqnarray*}
Therefore,
\begin{eqnarray}\label{P1eqn1HUP}
(\mathfrak{F}_{\bm\alpha,\lambda}f)(\bm{\xi})&=&\frac{c(\bm\alpha_{\lambda})}{(\sqrt{2\pi})^{\N}}e_{\bm\alpha,\lambda^2}(\bm{\xi})\int_{\mathbb{R}^{\N}}f(\bm{t})e^{i\lambda^2\sum_{k=1}^{\N}a(\alpha_k){t_{k}}^2}e^{-i\lambda^2\sum_{k=1}^{\N}t_{k}\xi_{k}\csc\alpha_{k}}d\bm{t}.
\end{eqnarray}
As we know traditional Fourier transform is stated as
\begin{eqnarray*}
\mathfrak{F}f(\bm{\xi})&=&\frac{1}{(\sqrt{2\pi})^{\N}}\int_{\mathbb{R}^{\N} }f(\bm{t})e^{-i\sum_{k=1}^{\N}t_{k}\xi_{k}}d\bm{t},\quad \bm{\xi}=(\xi_1,\xi_2,\cdots\xi_{\N})\in\mathbb{R}^{\N}.
\end{eqnarray*}
Replacing $f(\bm{t})$ by $f(\bm{t})e^{i\lambda^2\sum_{k=1}^{\N} a(\alpha_k){t_k}^2}$, and on putting $\frac{ \lambda^2\bm{\xi}}{\sin\bm{\alpha}}$ in place of $\bm{\xi}$, we get
\begin{align}\label{P1eqn2HUP}
\mathfrak{F}(f(\bm{t})e^{i\lambda^2\sum_{k=1}^{{\N}}a(\alpha_k){t_{k}}^2})\left(\frac{ \lambda^2\bm{\xi}}{\sin\bm{\alpha}}\right)
&=\frac{1}{(\sqrt{2\pi})^{\N}}\int_{\mathbb{R}^{\N}}f(\bm{t})e^{i\lambda^2\sum_{k=1}^{\N}a(\alpha_k){t_{k}}^2}e^{-i\lambda^2\sum_{k=1}^{\N}t_{k}\xi_{k}\csc\alpha_{k}}d\bm{t}.
\end{align}
From equation (\ref{P1eqn1HUP}) and (\ref{P1eqn2HUP}), we have
\begin{eqnarray*}
(\mathfrak{F}_{\bm\alpha,\lambda}f)(\bm{\xi})=c(\bm\alpha_\lambda)e_{\bm\alpha,\lambda^2}(\bm{\xi})\mathfrak{F}(f(\bm{t})e^{i\lambda^2\sum_{k=1}^{\N}a(\alpha_k){t_{k}}^2})\left(\frac{ \lambda^2\bm{\xi}}{\sin\bm{\alpha}}\right).
\end{eqnarray*}
This gives
\begin{align}\label{P1eqn2aHUP} 
\mathfrak{F}(f(\bm{t})e^{i\lambda^2\sum_{k=1}^{\N}a(\alpha_k){t_{k}}^2})(\boldsymbol{\eta})
=\frac{1}{c(\bm\alpha_\lambda)e_{\bm\alpha,\lambda^2}\left(\frac{1}{\lambda^2}\boldsymbol{\eta}\sin\bm{\alpha}\right)}(\mathfrak{F}_{\bm\alpha,\lambda}f)\left(\frac{1}{\lambda^2}\boldsymbol{\eta}\sin\bm{\alpha}\right).
\end{align}
Equation (\ref{P1eqn2aHUP}) gives the relation between the classical and fractional FT. For the MFrFT, we now derive Heisenberg's uncertainty principle.\\
If  $f \in L^2(\mathbb{R}^{\N})$ be any arbitrary function, then Heisenberg's uncertainty inequality for dimension  ${\N}$ is described by the following  \cite{verma2021note}.

\begin{eqnarray}{\label{P1eqn3HUP}}
\left(\int_{\mathbb{R}^{\N}} {\|\bm{t}\|}^2\left|{f(\bm{t})}\right|^2d\bm{t}\right)\left(\int_{\mathbb{R}^{\N}}{\|\boldsymbol{\eta}\|}^2|\mathfrak{F}f(\boldsymbol{\eta})|^2d\boldsymbol\eta\right)\geq \frac{{\N}^2}{4} \|f\|^4_{L^2(\mathbb{R}^{\N})}.
\end{eqnarray}
Replace $f(\bm{t})$ by $f(\bm{t})e^{i\lambda^2{\sum}_{k=1}^{\N} a(\alpha_k){t_k}^2}$ and noting that $\left|f(\bm{t})e^{i\lambda^2\sum_{k=1}^{\N}a(\alpha_k){t_k}^2}\right|=|f(\bm{t})|,$ we've
\begin{eqnarray}{\label{P1eqn4HUP}}
\left(\int_{\mathbb{R}^{\N}} {\|\bm{t}\|}^2|{f(\bm{t})|}^2d\bm{t}\right)\left(\int_{\mathbb{R}^{\N}}{\|\boldsymbol{\eta}\|}^2|\mathfrak{F}(f(\bm{t})e^{i\lambda^2\sum_{k=1}^{\N}a(\alpha_k){t_k}^2})(\boldsymbol{\eta})|^2d\boldsymbol\eta\right)\geq \frac{{\N}^2}{4}\|f\|^4_{L^2(\mathbb{R}^{\N})}.
\end{eqnarray}
On putting  $\displaystyle{\boldsymbol{\eta}=\frac{\lambda^2\bm{\xi}}{\sin\bm{\alpha}}}$ in $\displaystyle\int_{\mathbb{R}^{\N}}{\|\boldsymbol{\eta}\|}^2|\mathfrak{F}(f(\bm{t})e^{i\lambda^2\sum_{k=1}^{\N}a(\alpha_k){t_k}^2})(\boldsymbol{\eta})|^2d\boldsymbol\eta$ and using equation (\ref{P1eqn2aHUP}), we get
\begin{eqnarray*}
\int_{\mathbb{R}^{\N}}{\|\boldsymbol{\eta}\|}^2|\mathfrak{F}(f(\bm{t})e^{i\lambda^2\sum_{k=1}^{\N}a(\alpha_k){t_k}^2})(\boldsymbol{\eta})|^2d\boldsymbol\eta =\frac{\lambda^{2N}}{|\sin\bm{\alpha}|_m}\int_{\mathbb{R}^{\N}}{\left\|\frac{\lambda^2\bm{\xi}}{\sin\bm\alpha}\right\|^2}\left|\frac{1}{c(\bm\alpha_\lambda)e_{\alpha,\lambda^2}(\bm{\xi})}(\mathfrak{F}_{\bm\alpha,\lambda} f)(\bm{\xi})\right|^2 d\bm{\xi}.
\end{eqnarray*}
Now, consider $\displaystyle{\left\|\frac{\lambda^2\bm{\xi}}{\sin\bm\alpha}\right\|}=\left\|\left(\frac{\lambda^2\xi_1}{\sin\alpha_1},\frac{\lambda^2\xi_2}{\sin\alpha_2},\cdots,\frac{\lambda^2\xi_{\N}}{\sin\alpha_{\N}}\right)\right\|$.\\
Let $\displaystyle\frac{1}{\sin\alpha_i}=m_i$ for $i=1,2,\cdots,{\N},$ then
\begin{align}\label{p1eqn5HUP} \notag
{\left\|\frac{\lambda^2\bm{\xi}}{\sin\bm\alpha}\right\|}&=\lambda^2\|\left(m_1\xi_1,m_2\xi_2,\cdots,m_{\N}\xi_{\N}\right)\| \\ \notag
&=\lambda^2\left({m_1}^2\xi_1^2+{m_2}^2\xi_2^2+\cdots+{m_{\N}}^2\xi_{\N}^2\right)^{\frac{1}{2}}.\\ 
\end{align}
Therefore,
\begin{align}{\label{p1eqn6HUP}}
\left\|\frac{\lambda^2\bm{\xi}}{\sin\bm\alpha}\right\|\leq \lambda^2M\|\bm{\xi}\|,
\end{align}
where $M^2=\max\{m_i^2:i=1,2\cdots,{\N}\}.$\\
Therefore,
\begin{eqnarray}{\label{p1eqn7HUP}}
\int_{\mathbb{R}^{\N}}{\|\boldsymbol{\eta}\|}^2|\mathfrak{F}(f(\bm{t})e^{i\lambda^2\sum_{k=1}^{\N}a(\alpha_k){t_k}^2})(\boldsymbol{\eta})|^2d\boldsymbol\eta \leq \frac{M^2}{|c(\bm\alpha_\lambda)|^2}\frac{\lambda^{2N+4}}{\left|\sin\bm\alpha\right|_m}\int_{\mathbb{R}^{\N}}{\|\bm{\xi}\|}^2|\mathfrak{F}_{\bm\alpha,\lambda}f(\bm{\xi})|^2d\bm{\xi}.
\end{eqnarray}
Using inequality (\ref{p1eqn7HUP}) in (\ref{P1eqn4HUP}), we get
\begin{eqnarray*}
\left(\int_{\mathbb{R}^{\N}} {\|\bm{t}\|}^2|{f(\bm{t})|}^2d\bm{t}\right)\left(\int_{\mathbb{R}^{\N}}{\|\bm{\xi}\|}^2|\mathfrak{F}_{\bm\alpha,\lambda}f(\bm{\xi})|^2d\bm{\xi}\right)\geq \frac{|c(\bm\alpha_\lambda)|^2|\sin\bm\alpha|_m}{M^2(\lambda^2)^{{\N}+2}}\frac{{\N}^2}{4}\|f\|^4_{L^2(\mathbb{R}^{\N})}.
\end{eqnarray*}
Which implies
\begin{eqnarray}{\label{P1eqn8HUP}}
\left(\int_{\mathbb{R}^{\N}} {\|\bm{t}\|}^2|{f(\bm{t})|}^2d\bm{t}\right)\left(\int_{\mathbb{R}^{\N}}{\|\bm{\xi}\|}^2|\mathfrak{F}_{\bm\alpha,\lambda}f(\bm{\xi})|^2d\bm{\xi}\right)\geq P_{\bm\alpha,\lambda}\frac{{\N}^2}{4}\|f\|^4_{L^2(\mathbb{R}^{\N})},
\end{eqnarray}
where
\begin{align} \label{P1eqn5aHUP}
P_{\bm\alpha,\lambda}=\frac{|c(\bm\alpha_\lambda)|^2|\sin\bm\alpha|_m}{M^2(\lambda^2)^{{\N}+2}}.
\end{align}

\begin{lemma}\label{lemofHUP}
Let $\psi$ be an admissible wavelet and for any function $f\in L^2(\mathbb{R}^{\N}).$ Then,
\begin{eqnarray*}
\int_{\mathbb{R}_0^{\N}}\int_{\mathbb{R}^{\N}} \|\bm{\xi}\|^2|\mathfrak{F}_{\bm\alpha,\lambda} ( (W_{\psi}^{\bm\alpha,\lambda}{f})(\bm{a},\cdot))(\bm{\xi})|^2d\bm{\xi}\frac{d\bm{a}}{|\bm{a}|_m^2}&=&C_{\bm\alpha,\lambda}\int_{\mathbb{R}^{\N}}\|\bm{\xi}\|^2|(\mathfrak{F}_{\bm\alpha,\lambda}f)(\bm{\xi})|^2d\bm{\xi},
\end{eqnarray*}
where $C_{\bm\alpha,\lambda}$ is as defined in equation (\ref{P1eqn11}).
\end{lemma}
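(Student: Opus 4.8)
The plan is to reduce everything to Lemma \ref{P1lemma1} together with the explicit value of $C_{\bm{\alpha},\lambda}$ in (\ref{P1eqn11}), and then to run a single change of variables on the dilation parameter $\bm{a}$.

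First I would take the squared modulus in Lemma \ref{P1lemma1}. Since $c(\bm\alpha_\lambda)/\overline{c(\bm\alpha_\lambda)}$ has modulus $1$ and $e_{\bm{\alpha},\lambda^2}(\bm{a}\bm{\xi})$ is a unimodular exponential, the lemma gives, for each fixed $\bm{a}\in\mathbb{R}^{\N}_0$,
\[
\left|\mathfrak{F}_{\bm{\alpha},\lambda}\big((W_{\psi}^{\bm\alpha,\lambda}f)(\bm{a},\cdot)\big)(\bm{\xi})\right|^2
= (2\pi)^{\N}\,|\bm{a}|_m\,|(\mathfrak{F}_{\bm{\alpha},\lambda}f)(\bm{\xi})|^2\,|(\mathfrak{F}_{\bm{\alpha},\lambda}\psi)(\bm{a}\bm{\xi})|^2 .
\]
Multiplying by $\|\bm{\xi}\|^2$, integrating in $\bm{\xi}$ over $\mathbb{R}^{\N}$ and in $\bm{a}$ over $\mathbb{R}^{\N}_0$ against $d\bm{a}/|\bm{a}|_m^2$, and invoking Tonelli's theorem (the integrand is nonnegative) to interchange the two integrations, I obtain
\[
\int_{\mathbb{R}^{\N}_0}\int_{\mathbb{R}^{\N}} \|\bm{\xi}\|^2\left|\mathfrak{F}_{\bm\alpha,\lambda}\big((W_{\psi}^{\bm\alpha,\lambda}f)(\bm{a},\cdot)\big)(\bm{\xi})\right|^2 d\bm{\xi}\,\frac{d\bm{a}}{|\bm{a}|_m^2}
= (2\pi)^{\N}\int_{\mathbb{R}^{\N}} \|\bm{\xi}\|^2|(\mathfrak{F}_{\bm{\alpha},\lambda}f)(\bm{\xi})|^2 \left(\int_{\mathbb{R}^{\N}_0}|(\mathfrak{F}_{\bm{\alpha},\lambda}\psi)(\bm{a}\bm{\xi})|^2\frac{d\bm{a}}{|\bm{a}|_m}\right) d\bm{\xi}.
\]

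The computational heart of the argument is the inner integral. For $\bm{\xi}\in\mathbb{R}^{\N}_0$ (the complement being a Lebesgue null set, hence irrelevant) I would substitute $\bm{u}=\bm{a}\bm{\xi}$ componentwise; then $d\bm{u}=|\bm{\xi}|_m\,d\bm{a}$ and $|\bm{a}|_m=|\bm{u}|_m/|\bm{\xi}|_m$, so that $d\bm{a}/|\bm{a}|_m = d\bm{u}/|\bm{u}|_m$, the dilation-invariant measure on $\mathbb{R}^{\N}_0$, yielding
\[
\int_{\mathbb{R}^{\N}_0}|(\mathfrak{F}_{\bm{\alpha},\lambda}\psi)(\bm{a}\bm{\xi})|^2\frac{d\bm{a}}{|\bm{a}|_m}
= \int_{\mathbb{R}^{\N}_0}|(\mathfrak{F}_{\bm{\alpha},\lambda}\psi)(\bm{u})|^2\frac{d\bm{u}}{|\bm{u}|_m}
= \frac{C_{\bm{\alpha},\lambda}}{(2\pi)^{\N}},
\]
the last equality being precisely the definition (\ref{P1eqn11}). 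Pulling this constant back out of the $\bm{\xi}$-integral gives the asserted identity.

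The only point requiring care — the ``main obstacle'' — is the legitimacy of the interchange of integrations and of the change of variables. Admissibility of $\psi$ guarantees the inner integral is finite and independent of $\bm{\xi}$, so the whole double integral equals $C_{\bm{\alpha},\lambda}\int_{\mathbb{R}^{\N}}\|\bm{\xi}\|^2|(\mathfrak{F}_{\bm{\alpha},\lambda}f)(\bm{\xi})|^2 d\bm{\xi}$ whether this last quantity is finite or $+\infty$; in either case Tonelli applies to the nonnegative integrand, so no extra integrability hypothesis on $\|\bm{\xi}\|(\mathfrak{F}_{\bm{\alpha},\lambda}f)$ is needed. I would also note in passing that the unimodular factors $c(\bm\alpha_\lambda)/\overline{c(\bm\alpha_\lambda)}$ and the chirp $e_{\bm{\alpha},\lambda^2}$ disappear exactly because only $|\cdot|^2$ of the expression in Lemma \ref{P1lemma1} enters, which is what makes the identity so clean.
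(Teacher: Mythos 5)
Your proposal is correct and follows essentially the same route as the paper: both take the squared modulus of the identity in Lemma \ref{P1lemma1} (so the unimodular factors drop out), interchange the order of integration, and evaluate the inner $\bm{a}$-integral via the dilation invariance of $d\bm{a}/|\bm{a}|_m$ to recover $C_{\bm{\alpha},\lambda}$ from (\ref{P1eqn11}). The only difference is that you make explicit the Tonelli justification and the substitution $\bm{u}=\bm{a}\bm{\xi}$, which the paper leaves implicit by citing Theorem \ref{P1innTh}.
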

\begin{proof}
Using lemma (\ref{P1lemma1}) and theorem (\ref{P1innTh}), we get
\begin{align*}
\int_{\mathbb{R}_0^{\N}}\int_{\mathbb{R}^{\N}} \|\bm{\xi}\|^2&|\mathfrak{F}_{\bm\alpha,\lambda} ((W_{\psi}^{\bm\alpha,\lambda}{f})(\bm{a},\bm{b}))(\bm{\xi})|^2d\bm{\xi}\frac{d\bm{a}}{|\bm{a}|_m^2}\\
&=\int_{\mathbb{R}_0^{\N}}\int_{\mathbb{R}^{\N}} \|\bm{\xi}\|^2{|\bm{a}|_m({2\pi})^{\N}}(\mathfrak{F}_{\bm\alpha,\lambda}\psi)(\bm{a}\bm{\xi})(\mathfrak{F}_{\bm\alpha,\lambda}f)(\bm{\xi})\overline{(\mathfrak{F}_{\bm\alpha,\lambda} \psi)(\bm{a}\bm{\xi})}\overline{\mathfrak{F}_{\bm\alpha,\lambda}f(\bm{\xi})}d\bm{\xi}\frac{d\bm{a}}{|\bm{a}|_m^2}\\
&=C_{\bm\alpha,\lambda}\int_{\mathbb{R}^{\N}}\|\bm{\xi}\|^2|(\mathfrak{F}_{\bm\alpha,\lambda}f)(\bm{\xi})|^2d\bm{\xi}.
\end{align*}
This concludes the proof.
\end{proof}
Equation (\ref{P1eqn8HUP}) is the Heisenberg's uncertainty inequality for the MFrFT. Now we prove the Heisenberg's uncertainty principle for the MFrWT following the idea of \cite{singer1999uncertainty},\cite{wilczok2000new}.
\begin{theorem}
(Heisenberg’s inequality of uncertainty for MFrWT) Let $\psi$ be an admissible wavelet and for any function $f\in L^2(\mathbb{R}^{\N}),$ the MFrWT of $f$ with respect to the variable $\bm\alpha$ is specified by $(W_{\psi}^{\bm\alpha,\lambda}{f})(\bm{a},\bm{b})$ . Then
\begin{align*}
\left(\int_{\mathbb{R}_0^{\N}}\int_{\mathbb{R}^{\N}}\|\bm{b}\|^2|(W_{\psi}^{\bm\alpha,\lambda}{f})(\bm{a},\bm{b})|^2d\bm{b}\frac{d\bm{a}}{|\bm{a}|_m^2}\right)\left(\int_{\mathbb{R}^{\N}} \|\bm{\xi}\|^2|(\mathfrak{F}_{\bm\alpha,\lambda}f)(\bm{\xi})|^2d\bm{\xi}\right) \geq P_{\bm\alpha,\lambda}C_{\bm\alpha,\lambda}\frac{{\N}^2}{4}\|f\|^2_{L^2(\mathbb{R}^{\N})},
\end{align*}
where $C_{\bm\alpha,\lambda}$ and $P_{\bm\alpha,\lambda}$ are defined in equation (\ref{P1eqn11}) and equation (\ref{P1eqn5aHUP}) respectively.
\end{theorem}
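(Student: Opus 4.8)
The plan is to apply the Heisenberg uncertainty inequality $(\ref{P1eqn8HUP})$ for the MFrFT to the function $\bm{b}\mapsto (W_{\psi}^{\bm\alpha,\lambda}f)(\bm{a},\bm{b})$ with $\bm{a}$ held fixed, then integrate over $\bm{a}$ against the measure $d\bm{a}/|\bm{a}|_m^2$, controlling the resulting $\bm{a}$-integrals with the Cauchy--Schwarz inequality together with Lemma $(\ref{lemofHUP})$ and Corollary $(\ref{corofinnthe})$.

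First I would note that we may assume both factors on the left-hand side of the claimed inequality are finite, for otherwise there is nothing to prove; in particular this forces $(W_{\psi}^{\bm\alpha,\lambda}f)(\bm{a},\cdot)\in L^2(\mathbb{R}^{\N})$ for a.e.\ $\bm{a}\in\mathbb{R}_0^{\N}$. Fixing such an $\bm{a}$ and substituting $(W_{\psi}^{\bm\alpha,\lambda}f)(\bm{a},\cdot)$ in place of $f$ in $(\ref{P1eqn8HUP})$, then taking square roots, gives a pointwise-in-$\bm{a}$ inequality of the form $\sqrt{A(\bm{a})\,B(\bm{a})}\geq \sqrt{P_{\bm\alpha,\lambda}}\,\tfrac{\N}{2}\,C(\bm{a})$, where $A(\bm{a})=\int_{\mathbb{R}^{\N}}\|\bm{b}\|^2|(W_{\psi}^{\bm\alpha,\lambda}f)(\bm{a},\bm{b})|^2\,d\bm{b}$, $B(\bm{a})=\int_{\mathbb{R}^{\N}}\|\bm{\xi}\|^2|\mathfrak{F}_{\bm\alpha,\lambda}((W_{\psi}^{\bm\alpha,\lambda}f)(\bm{a},\cdot))(\bm{\xi})|^2\,d\bm{\xi}$, and $C(\bm{a})=\|(W_{\psi}^{\bm\alpha,\lambda}f)(\bm{a},\cdot)\|_{L^2(\mathbb{R}^{\N})}^2$.

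Next I would divide through by $|\bm{a}|_m^2$, integrate over $\mathbb{R}_0^{\N}$, and on the left-hand side apply the Cauchy--Schwarz inequality, splitting the weight as $|\bm{a}|_m^{-2}=|\bm{a}|_m^{-1}\cdot|\bm{a}|_m^{-1}$, so that $\int_{\mathbb{R}_0^{\N}}\sqrt{A(\bm{a})B(\bm{a})}\,|\bm{a}|_m^{-2}\,d\bm{a}$ is bounded above by $\big(\int_{\mathbb{R}_0^{\N}}A(\bm{a})\,|\bm{a}|_m^{-2}\,d\bm{a}\big)^{1/2}\big(\int_{\mathbb{R}_0^{\N}}B(\bm{a})\,|\bm{a}|_m^{-2}\,d\bm{a}\big)^{1/2}$. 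Then I would identify the three surviving pieces: by Corollary $(\ref{corofinnthe})$ the integral of $C(\bm{a})$ against $d\bm{a}/|\bm{a}|_m^2$ equals $C_{\bm\alpha,\lambda}\|f\|_{L^2(\mathbb{R}^{\N})}^2$; by Lemma $(\ref{lemofHUP})$ the integral of $B(\bm{a})$ against $d\bm{a}/|\bm{a}|_m^2$ equals $C_{\bm\alpha,\lambda}\int_{\mathbb{R}^{\N}}\|\bm{\xi}\|^2|(\mathfrak{F}_{\bm\alpha,\lambda}f)(\bm{\xi})|^2\,d\bm{\xi}$; and the integral of $A(\bm{a})$ against $d\bm{a}/|\bm{a}|_m^2$ is precisely the first double integral in the statement. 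Squaring the resulting inequality and cancelling one factor of $C_{\bm\alpha,\lambda}$ then yields the claimed bound.

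The main obstacle I anticipate is the measurability and integrability bookkeeping: one must verify that $(W_{\psi}^{\bm\alpha,\lambda}f)(\bm{a},\cdot)\in L^2(\mathbb{R}^{\N})$ for a.e.\ $\bm{a}$ and that $\bm{a}\mapsto A(\bm{a}),\,B(\bm{a}),\,C(\bm{a})$ are measurable, so that the pointwise Heisenberg inequality may legitimately be integrated in $\bm{a}$; one must also justify the interchanges of order of integration implicit in the invocations of Lemma $(\ref{lemofHUP})$ and Corollary $(\ref{corofinnthe})$, which are permissible by Tonelli's theorem since all the integrands involved are nonnegative. Once these points are settled, the Cauchy--Schwarz step and the final algebra are routine.
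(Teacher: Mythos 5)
Your proposal is correct and follows essentially the same route as the paper: substitute $(W_{\psi}^{\bm\alpha,\lambda}f)(\bm{a},\cdot)$ into the MFrFT Heisenberg inequality (\ref{P1eqn8HUP}), take square roots, integrate against $d\bm{a}/|\bm{a}|_m^2$, apply Cauchy--Schwarz (the paper calls it H\"older) together with Corollary \ref{corofinnthe} and Lemma \ref{lemofHUP}, then square and cancel one factor of $C_{\bm\alpha,\lambda}$. Your additional remarks on measurability and finiteness are points the paper passes over silently, and note that both arguments actually yield $\|f\|^4_{L^2(\mathbb{R}^{\N})}$ on the right-hand side, so the exponent $2$ in the theorem statement is a typographical slip in the paper.
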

\begin{proof}
By substituting $(W_{\psi}^{\bm\alpha,\lambda}{f})(\bm{a},\cdot)$ for $f(\cdot)$ in equation (\ref{P1eqn8HUP}), we obtain
\begin{align*}
\hspace{-1cm}\left(\int_{\mathbb{R}^{\N}}\|\bm{b}\|^2|(W_{\psi}^{\bm\alpha,\lambda}{f})(\bm{a},\bm{b})|^2d\bm{b}\right)\left(\int_{\mathbb{R}^{\N}} \|\bm{\xi}\|^2|\mathfrak{F}_{\bm\alpha,\lambda} ((W_{\psi}^{\bm\alpha,\lambda}{f})(\bm{a},.))(\bm{\xi})|^2d\bm{\xi} \right) \geq P_{\bm\alpha,\lambda}\frac{{\N}^2}{4}\|(W_{\psi}^{\bm\alpha,\lambda}{f})(\bm{a},.)\|^4_{L^2(\mathbb{R}^{\N})}.
\end{align*}
Taking the square root of the above equation on both sides yields,
\begin{align*}
\hspace{-0.8cm}\left(\int_{\mathbb{R}^{\N}}\|\bm{b}\|^2|(W_{\psi}^{\bm\alpha,\lambda}{f})(\bm{a},\bm{b})|^2d\bm{b}\right)^{\frac{1}{2}}\left(\int_{\mathbb{R}^{\N}} \|\bm{\xi}\|^2|\mathfrak{F}_{\bm\alpha,\lambda} ((W_{\psi}^{\bm\alpha,\lambda}{f})(\bm{a},\bm{b}))(\bm{\xi})|^2d\bm{\xi}\right)^{\frac{1}{2}} \geq {P^{\frac{1}{2}}_{\bm\alpha,\lambda}}\frac{{\N}}{2}\int_{\mathbb{R}^{\N}}|(W_{\psi}^{\bm\alpha,\lambda}{f})(\bm{a},\bm{b})|^2d\bm{b}.
\end{align*}
Now, we Integrate above equation with respect to the measure $\frac{d\bm{a}}{|\bm{a}|_m^2}$, we've
\begin{align*}
\hspace{-2cm}\int_{\mathbb{R}_0^{\N}}\left(\int_{\mathbb{R}^{\N}}\|\bm{b}\|^2|(W_{\psi}^{\bm\alpha,\lambda}{f})(\bm{a},\bm{b})|^2d\bm{b}\right)^{\frac{1}{2}}\left(\int_{\mathbb{R}^{\N}} \|\bm{\xi}\|^2|\mathfrak{F}_{\bm\alpha,\lambda} (W_{\psi}^{\bm\alpha,\lambda}{f}(\bm{a},\bm{b}))(\bm{\xi})|^2d\bm{\xi}\right)^{\frac{1}{2}}\frac{d\bm{a}}{|\bm{a}|_m^2}\\ \geq {P^{\frac{1}{2}}_{\bm\alpha,\lambda}}\frac{{\N}}{2}\int_{\mathbb{R}_0^{\N}} \int_{\mathbb{R}^{\N}}|(W_{\psi}^{\bm\alpha,\lambda}{f})(\bm{a},\bm{b})|^2\frac{d\bm{a}d\bm{b}}{|\bm{a}|_m^2}.
\end{align*}
Using Holder's inequality and corollary (\ref{corofinnthe}), we have
\begin{align*}
\left(\int_{\mathbb{R}_0^{\N}}\int_{\mathbb{R}^{\N}}\|\bm{b}\|^2|(W_{\psi}^{\bm\alpha,\lambda}{f})(\bm{a},\bm{b})|^2d\bm{b}\frac{d\bm{a}}{|\bm{a}|_m^2}\right)^{\frac{1}{2}}\left(\displaystyle\int_{\mathbb{R}_0^{\N}}\int_{\mathbb{R}^{\N}}\|\bm{\xi}\|^2|\mathfrak{F}_{\bm\alpha,\lambda} ((W_{\psi}^{\bm\alpha,\lambda}{f})(\bm{a},\bm{b}))(\bm{\xi})|^2d\bm{\xi} \frac{d\bm{a}}{|\bm{a}|_m^2}\right)^{\frac{1}{2}} \\
\geq {P^{\frac{1}{2}}_{\bm\alpha,\lambda}}\frac{{\N}}{2}C_{\bm\alpha,\lambda}\|f\|^2_{L^2(\mathbb{R}^{\N})}.
\end{align*}
Squaring both sides and applying lemma (\ref{lemofHUP}), we obtain
\begin{align*}
\left(\int_{\mathbb{R}_0^{\N}}\int_{\mathbb{R}^{\N}}\|\bm{b}\|^2|(W_{\psi}^{\bm\alpha,\lambda}{f})(\bm{a},\bm{b})|^2d\bm{b}\frac{d\bm{a}}{|\bm{a}|_m^2}\right)\left(C_{\bm\alpha,\lambda}\int_{\mathbb{R}^{\N}}\|\bm{\xi}\|^2|(\mathfrak{F}_{\bm\alpha,\lambda}f)(\bm{\xi})|^2d\bm{\xi}\right)\geq P_{\bm\alpha,\lambda} \frac{{\N}^2}{4}C^2_{\bm\alpha,\lambda}\|f\|^4_{L^2(\mathbb{R}^{\N})},
\end{align*}
i.e.,
\begin{align*}
\left(\int_{\mathbb{R}_0^{\N}}\int_{\mathbb{R}^{\N}}\|\bm{b}\|^2|(W_{\psi}^{\bm\alpha,\lambda}{f})(\bm{a},\bm{b})|^2d\bm{b}\frac{d\bm{a}}{|\bm{a}|_m^2}\right)\left(\int_{\mathbb{R}^{\N}}\|\bm{\xi}\|^2|(\mathfrak{F}_{\bm\alpha,\lambda}f)(\bm{\xi})|^2d\bm{\xi}\right)\geq P_{\bm\alpha,\lambda} C_{\bm\alpha,\lambda}\frac{{\N}^2}{4}\|f\|^4_{L^2(\mathbb{R}^{\N})}.
\end{align*}
This concludes the proof for this section.
\end{proof}
\subsection{Logarithmic Uncertainty Principle for MFrWT}
Basic idea about logarithmic uncertainty principle for fractional wavelet transform is given by Mawardi Bahri in \cite{bahri2017logarithmic}.
In this context, we formally derive a logarithmic uncertainty principle for the MFrFT using MFrFT properties and the logarithmic uncertainty principle for the fractional Fourier transform.\\
In \cite{beckner1995pitt}, the logarithmic uncertainty inequality for classical Fourier transform is given by
\begin{align}{\label{P1eqn1LgUP}}
\int_{\mathbb{R}^{\N}} \ln\|\mathbf{x}\||f(\mathbf{x})|^2d\mathbf{x}+\int_{\mathbb{R}^{\N}} \ln\|\boldsymbol\eta\||\mathfrak{F}f(\boldsymbol{\eta})|^2d\boldsymbol\eta \geq D\int_{\mathbb{R}^{\N}}|f(\mathbf{x})|^2d\mathbf{x},
\end{align} 
where $D = \psi\left(\frac{{\N}}{4}\right)-\ln 2,$ \quad $\psi(t)=\frac{d}{dt}[\ln\Gamma(t)]$.\\

Replacing $f(\mathbf{x})$ with $f(\mathbf{x})e^{i\lambda^2\sum_{k=1}^{\N}a(\alpha_k){x_k}^2}$ in inequality (\ref{P1eqn1LgUP}) and observing that  $\left|f(\mathbf{x})e^{i\lambda^2\sum_{k=1}^{\N}a(\alpha_k){x_k}^2}\right|=\left|f(\mathbf{x})\right|,$ we get
\begin{eqnarray}\label{P1eqn1aLgUP}
\int_{\mathbb{R}^{\N}} \ln\|\mathbf{x}\|\left|f(\mathbf{x})\right|^2d\mathbf{x}+\int_{\mathbb{R}^{\N}} \ln\|\boldsymbol\eta\||\mathfrak{F}(f(\mathbf{x})e^{i\lambda^2\sum_{k=1}^{\N}a(\alpha_k){x_k}^2})(\boldsymbol{\eta})|^2d\boldsymbol\eta \geq D\int_{\mathbb{R}^{\N}}|f(\mathbf{x})|^2d\mathbf{x}.
\end{eqnarray}
Put $\boldsymbol{\eta}=\frac{\lambda^2\bm{\xi}}{\sin\bm{\alpha}}$ in $\displaystyle{\int_{\mathbb{R}^{\N}} \ln\|\boldsymbol\eta\||\mathfrak{F}f(\mathbf{x})e^{i\lambda^2\sum_{k=1}^{\N}a(\alpha_k){x_k}^2}(\boldsymbol{\eta})|^2d\boldsymbol\eta}$ and using equation (\ref{P1eqn2aHUP}), we get
\begin{align*}
\int_{\mathbb{R}^{\N}} \ln\|\boldsymbol\eta\||\mathfrak{F}(f(\mathbf{x})e^{i\lambda^2\sum_{k=1}^{\N}a(\alpha_k){x_k}^2})(\boldsymbol{\eta})|^2d\boldsymbol\eta 
&=\frac{\lambda^{2N}}{\left|\sin\bm\alpha\right|_m}\int_{\mathbb{R}^{\N}} \ln\left\|\frac{\lambda^2\bm{\xi}}{\sin\bm{\alpha}}\right\|\left|\frac{1}{c(\bm\alpha_\lambda)e_{\bm\alpha,\lambda^2}(\bm{\xi})}\mathfrak{F}_{\bm\alpha,\lambda}f(\bm{t})(\bm{\xi})\right|^2 d\bm{\xi}\\
&=\frac{1}{|c(\bm\alpha_\lambda)|^2}\frac{\lambda^{2N}}{\left|\sin\bm\alpha\right|_m}\int_{\mathbb{R}^{\N}}\ln\left\|\frac{\lambda^2\bm{\xi}}{\sin\bm{\alpha}}\right\|\left|(\mathfrak{F}_{\bm\alpha,\lambda}f)(\bm{\xi})\right|^2 d\bm{\xi}.
\end{align*}
From equation (\ref{P1eqn1aLgUP}), we get
\begin{align*}
\int_{\mathbb{R}^{\N}} \ln\|\mathbf{x}\||f(\mathbf{x})|^2d\mathbf{x}+\frac{1}{|c(\bm\alpha_\lambda)|^2}\frac{\lambda^{2N}}{\left|\sin\bm\alpha\right|_m}\int_{\mathbb{R}^{\N}}\ln
\left\|\frac{\lambda^2\bm{\xi}}{\sin\bm{\alpha}}\right\| |(\mathfrak{F}_{\bm\alpha,\lambda}f)(\bm{\xi})|^2d\bm{\xi}
\geq D\int_{\mathbb{R}^{\N}}|f(\mathbf{x})|^2d\mathbf{x}.
\end{align*}
Then we have
\begin{align}\label{P1eqn2LgUP}
\int_{\mathbb{R}^{\N}} \ln\|\mathbf{x}\||f(\mathbf{x})|^2d\mathbf{x}+P'_{\bm\alpha,\lambda}\int_{\mathbb{R}^{\N}}\ln\left\|\frac{\lambda^2\bm{\xi}}{\sin\bm{\alpha}}\right\||(\mathfrak{F}_{\bm\alpha,\lambda}f)(\bm{\xi})|^2d\bm{\xi}\geq D\int_{\mathbb{R}^{\N}}|f(\mathbf{x})|^2d\mathbf{x},
\end{align}
where 
\begin{equation}\label{P1eqnPalpha}
P'_{\bm\alpha,\lambda}=\frac{\lambda^{2N}}{{|c(\bm\alpha_\lambda)|^2}|\sin\bm\alpha|_m}.
\end{equation} 
From equation (\ref{p1eqn6HUP}), we know
\begin{align*}
\left\|\frac{\lambda^2\bm{\xi}}{\sin\bm\alpha}\right\|\leq \lambda^2M\|\bm{\xi}\|.
\end{align*} 
Taking $\ln$ on both sides of above equation, we get
\begin{align*}
 \ln\left\|\frac{\lambda^2\bm{\xi}}{\sin\bm{\alpha}}\right\|\leq \ln (\lambda^2M) + \ln\|\bm{\xi}\|.
\end{align*}
Therefore, equation(\ref{P1eqn2LgUP}) can be re-written as,
\begin{align*}
\int_{\mathbb{R}^{\N}} \ln\|\mathbf{x}\||f(\mathbf{x})|^2d\mathbf{x}+P'_{\bm\alpha,\lambda}\int_{\mathbb{R}^{\N}}[\ln (\lambda^2M) + \ln\|\bm{\xi}\|]|(\mathfrak{F}_{\bm\alpha,\lambda}f)(\bm{\xi})|^2d\bm{\xi}\geq D\int_{\mathbb{R}^{\N}}|f(\mathbf{x})|^2d\mathbf{x}\\
\int_{\mathbb{R}^{\N}} \ln\|\mathbf{x}\||f(\mathbf{x})|^2d\mathbf{x}+P'_{\bm\alpha,\lambda}\ln (\lambda^2M)\int_{\mathbb{R}^{\N}}|(\mathfrak{F}_{\bm\alpha,\lambda}f)(\bm{\xi})|^2d\bm{\xi} +P'_{\bm\alpha,\lambda} \int_{\mathbb{R}^{\N}}\ln\|\bm{\xi}\||(\mathfrak{F}_{\bm\alpha,\lambda}f)(\bm{\xi})|^2d\bm{\xi}\geq D\int_{\mathbb{R}^{\N}}|f(\mathbf{x})|^2d\mathbf{x}.
\end{align*}
Using Parseval's relation, we get
\begin{align}\label{P1eqn3LgUP}
\int_{\mathbb{R}^{\N}} \ln\|\mathbf{x}\||f(\mathbf{x})|^2d\mathbf{x}+P'_{\bm\alpha,\lambda}\int_{\mathbb{R}^{\N}}\ln\|\bm{\xi}\||(\mathfrak{F}_{\bm\alpha,\lambda}f)(\bm{\xi})|^2d\bm{\xi}\geq (D-P'_{\alpha,\lambda}\ln (\lambda^2M))\int_{\mathbb{R}^{\N}}|f(\mathbf{x})|^2d\mathbf{x}.
\end{align}

With the help of logarithmic uncertainty inequality for MFrFT (\ref{P1eqn3LgUP}), we will prove the following for the MFrWT.
\begin{theorem}(Logarithmic uncertainty inequality for MFrWT)
Let $\psi$ be an admissible wavelet and for any $f\in L^2(\mathbb{R}^{\N})$ and for some $M> 0,$  Let MFrWT of $f$ with  parameter $\bm{\alpha} $ is given by $(W_{\psi}^{\bm{\alpha}, \lambda}f)(\bm{a},\bm{b}).$ Then
\begin{align*}
\int_{\mathbb{R}_0^{\N}}\int_{\mathbb{R}^{\N}}\ln\|\bm{b}\||(W_{\psi}^{\bm\alpha,\lambda}{f})(\bm{a},\bm{b})|^2d\bm{b}\frac{d\bm{a}}{|\bm{a}|_m^2}+C_{\bm\alpha,\lambda}P'_{\bm\alpha,\lambda}\int_{\mathbb{R}^{\N}}\ln\|\bm{\xi}\||&\mathfrak{F}_{\bm\alpha,\lambda}f)|^2(\bm{\xi})d\bm{\xi}\\
&\geq\left(D-P'_{\boldsymbol{\alpha,\lambda}}\ln (\lambda^2M)\right)C_{\alpha,\lambda}\|f\|_{L^2(\mathbb{R}^{\N})}^2,
\end{align*}
where $P'_{\bm\alpha,\lambda}$ is given by equation (\ref{P1eqnPalpha}).
\end{theorem}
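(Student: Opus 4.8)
The plan is to imitate the argument used for Heisenberg's inequality: transplant the scalar logarithmic inequality (\ref{P1eqn3LgUP}) for the MFrFT onto the wavelet side by replacing $f$ with $(W_{\psi}^{\bm\alpha,\lambda}f)(\bm{a},\cdot)$, then integrate the resulting pointwise-in-$\bm{a}$ inequality against the measure $\frac{d\bm{a}}{|\bm{a}|_m^2}$ on $\mathbb{R}^{\N}_0$ and evaluate each term using the results already established. First I would observe that, by Corollary \ref{corofinnthe}, the section $\bm{b}\mapsto(W_{\psi}^{\bm\alpha,\lambda}f)(\bm{a},\bm{b})$ belongs to $L^2(\mathbb{R}^{\N})$ for almost every $\bm{a}\in\mathbb{R}^{\N}_0$; hence (\ref{P1eqn3LgUP}) applies to it and gives, for a.e. $\bm{a}$,
\begin{align*}
\int_{\mathbb{R}^{\N}}\ln\|\bm{b}\|\,|(W_{\psi}^{\bm\alpha,\lambda}f)(\bm{a},\bm{b})|^2d\bm{b}&+P'_{\bm\alpha,\lambda}\int_{\mathbb{R}^{\N}}\ln\|\bm{\xi}\|\,|\mathfrak{F}_{\bm\alpha,\lambda}((W_{\psi}^{\bm\alpha,\lambda}f)(\bm{a},\cdot))(\bm{\xi})|^2d\bm{\xi}\\
&\geq\bigl(D-P'_{\bm\alpha,\lambda}\ln(\lambda^2M)\bigr)\int_{\mathbb{R}^{\N}}|(W_{\psi}^{\bm\alpha,\lambda}f)(\bm{a},\bm{b})|^2d\bm{b}.
\end{align*}
In contrast to the Heisenberg case this inequality is \emph{additive} in the two spectral moments, so no square-root or H\"older step is needed; one simply integrates both sides with respect to $\frac{d\bm{a}}{|\bm{a}|_m^2}$.

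It then remains to identify the three integrated terms. The first left-hand term is already the double integral appearing in the statement. For the second I would prove, exactly as in Lemma \ref{lemofHUP} but with the weight $\|\bm{\xi}\|^2$ replaced by $\ln\|\bm{\xi}\|$, the identity
\begin{align*}
\int_{\mathbb{R}^{\N}_0}\int_{\mathbb{R}^{\N}}\ln\|\bm{\xi}\|\,|\mathfrak{F}_{\bm\alpha,\lambda}((W_{\psi}^{\bm\alpha,\lambda}f)(\bm{a},\cdot))(\bm{\xi})|^2d\bm{\xi}\frac{d\bm{a}}{|\bm{a}|_m^2}=C_{\bm\alpha,\lambda}\int_{\mathbb{R}^{\N}}\ln\|\bm{\xi}\|\,|(\mathfrak{F}_{\bm\alpha,\lambda}f)(\bm{\xi})|^2d\bm{\xi};
\end{align*}
by Lemma \ref{P1lemma1} one has $|\mathfrak{F}_{\bm\alpha,\lambda}((W_{\psi}^{\bm\alpha,\lambda}f)(\bm{a},\cdot))(\bm{\xi})|^2=(2\pi)^{\N}|\bm{a}|_m|(\mathfrak{F}_{\bm\alpha,\lambda}f)(\bm{\xi})|^2|(\mathfrak{F}_{\bm\alpha,\lambda}\psi)(\bm{a}\bm{\xi})|^2$, and after Fubini the substitution $\bm{u}=\bm{a}\bm{\xi}$ (under which $\frac{d\bm{a}}{|\bm{a}|_m}=\frac{d\bm{u}}{|\bm{u}|_m}$) collapses the $\bm{a}$-integral to $\int_{\mathbb{R}^{\N}_0}|(\mathfrak{F}_{\bm\alpha,\lambda}\psi)(\bm{u})|^2\frac{d\bm{u}}{|\bm{u}|_m}=C_{\bm\alpha,\lambda}/(2\pi)^{\N}$. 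For the right-hand side, Corollary \ref{corofinnthe} gives $\int_{\mathbb{R}^{\N}_0}\int_{\mathbb{R}^{\N}}|(W_{\psi}^{\bm\alpha,\lambda}f)(\bm{a},\bm{b})|^2d\bm{b}\frac{d\bm{a}}{|\bm{a}|_m^2}=C_{\bm\alpha,\lambda}\|f\|^2_{L^2(\mathbb{R}^{\N})}$. Substituting these three evaluations into the integrated inequality yields the assertion.

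The only real subtlety — the part I would be most careful about — is the measure-theoretic bookkeeping forced by the sign change and the logarithmic blow-up of $\ln\|\cdot\|$ near $0$ and $\infty$: before applying Fubini or invoking (\ref{P1eqn3LgUP}) pointwise in $\bm{a}$ one should split $\ln\|\bm{\xi}\|=\ln^{+}\|\bm{\xi}\|-\ln^{-}\|\bm{\xi}\|$ and verify finiteness, noting that on the set where the left-hand side is $+\infty$ the inequality holds trivially, so one may assume all the integrals involved are finite. Everything else reduces to the substitutions above together with the inner-product relation of Theorem \ref{P1innTh} and its corollary.
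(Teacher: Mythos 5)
Your proposal follows the paper's proof essentially verbatim: substitute $(W_{\psi}^{\bm\alpha,\lambda}f)(\bm{a},\cdot)$ into the MFrFT logarithmic inequality (\ref{P1eqn3LgUP}), integrate against $\frac{d\bm{a}}{|\bm{a}|_m^2}$, and evaluate the three terms via Lemma \ref{P1lemma1} (with the $\bm{u}=\bm{a}\bm{\xi}$ substitution) and Corollary \ref{corofinnthe}. Your added care about the sign and integrability of $\ln\|\cdot\|$ is a sensible refinement the paper glosses over, but the argument is the same.
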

\begin{proof}
Replace $f(.)$ with $(W_{\psi}^{\bm\alpha,\lambda}{f})(\bm{a},.)$ on either side of equation (\ref{P1eqn3LgUP}), we have
\begin{align*}
\int_{\mathbb{R}^{\N}}\ln\|\bm{b}\||(W_{\psi}^{\bm\alpha,\lambda}{f})(\bm{a},\bm{b})|^2d\bm{b}+P'_{\bm\alpha,\lambda}\int_{\mathbb{R}^{\N}} \ln\left\|\bm{\xi}\right\||(\mathfrak{F}_{\bm\alpha,\lambda}(W_{\psi}^{\bm\alpha,\lambda}{f})&(\bm{a},\bm{b}))(\bm{\xi})|^2d\bm{\xi}\\
&\geq\left(D-P'_{\boldsymbol{\alpha,\lambda}}\ln (\lambda^2M)\right)\int_{\mathbb{R}^{\N}}|(W_{\psi}^{\bm\alpha,\lambda}{f})(\bm{a},\bm{b})|^2d\bm{b}.
\end{align*}
Now, we integrate above equation with respect to measure $\frac{d\bm{a}}{|\bm{a}|_m^2},$ we get
\begin{align}\label{P1eqn4LgUP}\notag
\int_{\mathbb{R}_0^{\N}}\int_{\mathbb{R}^{\N}}\ln\|\bm{b}\||(W_{\psi}^{\bm\alpha,\lambda}{f})(\bm{a},\bm{b})|^2d\bm{b}\frac{d\bm{a}}{|\bm{a}|_m^2}+P'_{\bm\alpha,\lambda}\int_{\mathbb{R}_0^{\N}}\int_{\mathbb{R}^{\N}}&\ln\left\|\bm{\xi}\right\||(\mathfrak{F}_{\bm\alpha,\lambda}(W_{\psi}^{\bm\alpha,\lambda}{f})(\bm{a},\bm{b}))(\bm{\xi})|^2d\bm{\xi}\frac{d\bm{a}}{|\bm{a}|_m^2}\\
&\geq\left(D-P'_{\boldsymbol{\alpha,\lambda}}\ln (\lambda^2M)\right)\int_{\mathbb{R}_0^{\N}}\int_{\mathbb{R}^{\N}}|(W_{\psi}^{\bm\alpha,\lambda}{f})(\bm{a},\bm{b})|^2d\bm{b}\frac{d\bm{a}}{|\bm{a}|_m^2}.
\end{align}

Hence, using corollary (\ref{corofinnthe}) and lemma (\ref{P1lemma1}) in equation (\ref{P1eqn4LgUP}), we have
\begin{align*}
 \int_{\mathbb{R}_0^{\N}}\int_{\mathbb{R}^{\N}}\ln\|\bm{b}\||(W_{\psi}^{\bm\alpha,\lambda}{f})(\bm{a},\bm{b})|^2d\bm{b}\frac{d\bm{a}}{|\bm{a}|_m^2}+P'_{\bm\alpha,\lambda}C_{\bm\alpha,\lambda}\int_{\mathbb{R}^{\N}}\ln\|\bm{\xi}\|&|(\mathfrak{F}_{\bm\alpha,\lambda}f)(\bm{\xi})|^2d\bm{\xi}\\
 &\geq\left(D-P'_{\boldsymbol{\alpha,\lambda}}\ln (\lambda^2M)\right)C_{\alpha,\lambda}\int_{\mathbb{R}^{\N}}|f(\mathbf{x})|^2d\mathbf{x}.
\end{align*}
where $M^2=\displaystyle\max \left\{\frac{1}{\sin^2\alpha_i}:i=1,2,\cdots,{\N} \right \}.$\\
This concludes the proof for this section.
\end{proof}
\subsection{Local Uncertainty Inequality for MFrWT}
In harmonic analysis, Heisenberg's uncertainty principle states that if any arbitrary function $f$ is specifically restricted, the corresponding Fourier transform of $f$ cannot be reduced to the neighborhood of a point, But this never prevents the Fourier transform of $f$ from being focused approximately at two or more distinct points.  In fact, it cannot occur either, and this is the purpose of local uncertainty inequalities to make this specific. The local uncertainty principle for the classical Fourier transform can be stated using the definition of the traditional Fourier transform for $\alpha=\frac{\pi}{2},$ as follows \cite{yang2013mathematical} :\\
(i)There exists a invariable $A_\p$ for  $0 < \p < \frac{{\N}}{2}$ such that for every measurable subsets $K$ of $\mathbb{R}^{\N}$ and for all $f \in L^2(\mathbb{R}^{\N})$
\begin{eqnarray}{\label{P1(i)LUP}}
\int_{K}|(\mathfrak{F}f)(\bm{\xi})|^2d\bm{\xi}\leq A_\p(\lambda(K))^{\frac{2\p}{\N}}\|\|\mathbf{x}\|^\p f\|^2_{L^2(\mathbb{R}^{\N})}.
\end{eqnarray}
(ii)There exists a invariable $A_\p$ for  $\p>\frac{\N}{2}$ such that for every measurable subsets $K$ of $\mathbb{R}^{\N}$ and for all $f \in L^2(\mathbb{R}^{\N})$
\begin{eqnarray}{\label{P1(ii)LUP}}
\int_{K}|(\mathfrak{F}f)(\bm{\xi})|^2d\bm{\xi}\leq A_\p\lambda(K)\|f\|^{2-\frac{\N}{\p}}_{L^2(\mathbb{R}^{\N})}\|\|\mathbf{x}\|^\p f\|^{\frac{\N}{\p}}_{L^2(\mathbb{R}^{\N})}.
\end{eqnarray}
Replacing $ f(\mathbf{x})$ by $f(\mathbf{x})e^{i\lambda^2\sum_{k=1}^{\N}a(\alpha_k){x_{k}}^2}$ in equation (\ref{P1(i)LUP}) and using equation (\ref{P1eqn2aHUP}) we have
\begin{eqnarray}\label{P1eqn1LUP}
\int_{K}\left|(\mathfrak{F}_{\bm\alpha,\lambda}f)\left(\frac{1}{\lambda^2}\bm{\xi}\sin\bm{\alpha}\right)\right|^2d\bm{\xi}\leq |c(\bm\alpha_\lambda)|^2A_\p(\lambda(K))^{\frac{2\p}{\N}}\|\|\mathbf{x}\|^\p f\|^2_{L^2(\mathbb{R}^{\N})}.
\end{eqnarray}
Put $\frac{1}{\lambda^2}\bm{\xi}\sin\bm{\alpha}=\boldsymbol{\eta}$ in equation (\ref{P1eqn1LUP}), we have
\begin{eqnarray*}
\frac{(\lambda^2)^{\N}}{|\sin\bm\alpha|_m}\int_K|(\mathfrak{F}_{\bm\alpha,\lambda}f)(\boldsymbol\eta)|^2d\boldsymbol\eta\leq |c(\bm\alpha_\lambda)|^2 A_\p(\lambda(K))^{\frac{2\p}{\N}}\|\|\mathbf{x}\|^\p f\|^2_{L^2(\mathbb{R}^{\N})}.
\end{eqnarray*}
That is,
\begin{eqnarray}\label{P1eqn2LUP} 
\int_K|(\mathfrak{F}_{\bm\alpha,\lambda}f)(\boldsymbol\eta)|^2d\boldsymbol\eta \leq |c(\bm\alpha_\lambda)|^2 A_\p\frac{|\sin\bm\alpha|_m}{(\lambda^2)^{\N}}(\lambda(K))^{\frac{2\p}{\N}}\|\|\mathbf{x}\|^\p f\|^2_{L^2(\mathbb{R}^{\N})}.
\end{eqnarray}
Similarly, from equation (\ref{P1(ii)LUP}), we get
\begin{eqnarray}\label{P1eqn3LUP}
\int_K|(\mathfrak{F}_{\bm\alpha,\lambda}f)(\boldsymbol\eta)|^2d\boldsymbol\eta \leq |c(\bm\alpha_\lambda)|^2 A_\p\frac{|\sin\bm\alpha|_m}{(\lambda^2)^{\N}}\lambda(K)\|f\|^{2-\frac{\N}{\p}}_{L^2(\mathbb{R}^{\N})}\|\|\mathbf{x}\|^\p f\|^{\frac{\N}{\p}}_{L^2(\mathbb{R}^{\N})}.
\end{eqnarray}
Now, we will illustrate the major outcome of this section.
\begin{theorem}\label{P1Th1LUP}
(Local uncertainty inequality for MFrWT) If $\psi$  is an admissible wavelet and $\p$ be such that $0 < \p < \frac{\N}{2}.$ Then, for every measurable subsets $K$ of $\mathbb{R}^{\N},$ there exist constant $A_\p$  and for all $f\in L^2(\mathbb{R}^{\N})$ such that 

\begin{eqnarray*}
\int_K|(\mathfrak{F}_{\bm\alpha,\lambda}f)(\boldsymbol\eta)|^2d\boldsymbol\eta\leq\frac{|c(\bm\alpha_\lambda)|^2}{C_{\bm\alpha,\lambda}}A_\p\frac{|\sin\bm\alpha|_m}{(\lambda^2)^{\N}}(\lambda(K))^{\frac{2\p}{\N}}\int_{\mathbb{R}_0^{\N}}\int_{\mathbb{R}^{\N}}\|\bm{b}\|^{2\p} \left|(W_{\psi}^{\bm\alpha,\lambda}{f})(\bm{a},\bm{b})\right|^2d\bm{b}\frac{d\bm{a}}{|\bm{a}|_m^2}.
\end{eqnarray*}
\end{theorem}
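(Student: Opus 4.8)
The plan is to transplant the local uncertainty inequality for the MFrFT, namely inequality~(\ref{P1eqn2LUP}), onto the MFrWT exactly in the spirit of the Heisenberg proof above, by substituting the slice $(W_{\psi}^{\bm\alpha,\lambda}f)(\bm{a},\cdot)$ in place of $f$ and then integrating out the dilation variable $\bm{a}$ against the measure $\frac{d\bm{a}}{|\bm{a}|_m^2}$. First I would fix $\bm{a}\in\mathbb{R}_0^{\N}$ and apply~(\ref{P1eqn2LUP}) to the function $\bm{b}\mapsto(W_{\psi}^{\bm\alpha,\lambda}f)(\bm{a},\bm{b})$, which lies in $L^2(\mathbb{R}^{\N})$ by Corollary~(\ref{corofinnthe}) (for a.e.\ $\bm{a}$). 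This yields
\begin{align*}
\int_K\left|\mathfrak{F}_{\bm\alpha,\lambda}\big((W_{\psi}^{\bm\alpha,\lambda}f)(\bm{a},\cdot)\big)(\boldsymbol\eta)\right|^2d\boldsymbol\eta
\leq |c(\bm\alpha_\lambda)|^2A_\p\frac{|\sin\bm\alpha|_m}{(\lambda^2)^{\N}}(\lambda(K))^{\frac{2\p}{\N}}\int_{\mathbb{R}^{\N}}\|\bm{b}\|^{2\p}\left|(W_{\psi}^{\bm\alpha,\lambda}f)(\bm{a},\bm{b})\right|^2d\bm{b}.
\end{align*}

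Next I would integrate both sides with respect to $\frac{d\bm{a}}{|\bm{a}|_m^2}$ over $\mathbb{R}_0^{\N}$. On the right-hand side this produces precisely the double integral appearing in the statement, with the constant $|c(\bm\alpha_\lambda)|^2A_\p\frac{|\sin\bm\alpha|_m}{(\lambda^2)^{\N}}(\lambda(K))^{\frac{2\p}{\N}}$ pulled outside. On the left-hand side, after interchanging the $\bm{a}$-integral with the $\boldsymbol\eta$-integral over $K$ (justified by Tonelli, since the integrand is nonnegative), I would invoke Lemma~(\ref{P1lemma1}): from $\mathfrak{F}_{\bm\alpha,\lambda}\big((W_{\psi}^{\bm\alpha,\lambda}f)(\bm{a},\cdot)\big)(\boldsymbol\eta)=\sqrt{|\bm{a}|_m}(\sqrt{2\pi})^{\N}\frac{c(\bm\alpha_\lambda)}{\overline{c(\bm\alpha_\lambda)}}e_{\bm\alpha,\lambda^2}(\bm{a}\boldsymbol\eta)(\mathfrak{F}_{\bm\alpha,\lambda}f)(\boldsymbol\eta)\overline{(\mathfrak{F}_{\bm\alpha,\lambda}\psi)(\bm{a}\boldsymbol\eta)}$ and $|e_{\bm\alpha,\lambda^2}|=1$, $|c(\bm\alpha_\lambda)/\overline{c(\bm\alpha_\lambda)}|=1$, we get
\begin{align*}
\int_{\mathbb{R}_0^{\N}}\left|\mathfrak{F}_{\bm\alpha,\lambda}\big((W_{\psi}^{\bm\alpha,\lambda}f)(\bm{a},\cdot)\big)(\boldsymbol\eta)\right|^2\frac{d\bm{a}}{|\bm{a}|_m^2}
=(2\pi)^{\N}|(\mathfrak{F}_{\bm\alpha,\lambda}f)(\boldsymbol\eta)|^2\int_{\mathbb{R}_0^{\N}}|(\mathfrak{F}_{\bm\alpha,\lambda}\psi)(\bm{a}\boldsymbol\eta)|^2\frac{d\bm{a}}{|\bm{a}|_m}.
\end{align*}
The substitution $\bm{u}=\bm{a}\boldsymbol\eta$ (valid for $\boldsymbol\eta\in\mathbb{R}_0^{\N}$, and $K\setminus\mathbb{R}_0^{\N}$ is null) leaves the inner integral equal to $\int_{\mathbb{R}_0^{\N}}|(\mathfrak{F}_{\bm\alpha,\lambda}\psi)(\bm{u})|^2\frac{d\bm{u}}{|\bm{u}|_m}$, so the whole left side becomes $C_{\bm\alpha,\lambda}\int_K|(\mathfrak{F}_{\bm\alpha,\lambda}f)(\boldsymbol\eta)|^2d\boldsymbol\eta$ by the definition~(\ref{P1eqn11}) of $C_{\bm\alpha,\lambda}$.

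Putting the two sides together and dividing by $C_{\bm\alpha,\lambda}$ gives exactly the asserted inequality. The routine parts are the modulus bookkeeping (all the chirp factors $e_{\bm\alpha,\lambda^2}$ and the unimodular constant $c(\bm\alpha_\lambda)/\overline{c(\bm\alpha_\lambda)}$ drop out) and the change of variables. The one point demanding a little care — the main obstacle, such as it is — is the justification of the two interchanges of order of integration: the Tonelli step for the nonnegative integrand over $K\times\mathbb{R}_0^{\N}$ on the left, and the same over $\mathbb{R}_0^{\N}\times\mathbb{R}^{\N}$ on the right; both are legitimate precisely because every integrand in sight is nonnegative, so no integrability hypothesis beyond $f,\psi\in L^2$ and admissibility of $\psi$ is needed. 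I would also note in passing that the companion estimate for $\p>\frac{\N}{2}$ follows identically from~(\ref{P1eqn3LUP}) in place of~(\ref{P1eqn2LUP}), with the Hölder-type factor $\|f\|^{2-\N/\p}\|\|\mathbf{x}\|^\p f\|^{\N/\p}$ replaced by the corresponding expression in $W_{\psi}^{\bm\alpha,\lambda}f$.
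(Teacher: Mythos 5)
Your proposal is correct and follows essentially the same route as the paper: substitute the slice $(W_{\psi}^{\bm\alpha,\lambda}f)(\bm{a},\cdot)$ into the MFrFT local uncertainty inequality (\ref{P1eqn2LUP}), integrate against $\frac{d\bm{a}}{|\bm{a}|_m^2}$, and identify the left side as $C_{\bm\alpha,\lambda}\int_K|(\mathfrak{F}_{\bm\alpha,\lambda}f)(\boldsymbol\eta)|^2d\boldsymbol\eta$. Your explicit computation of that left-hand identity via Lemma (\ref{P1lemma1}) and the substitution $\bm{u}=\bm{a}\boldsymbol\eta$ is in fact a cleaner justification than the paper's bare citation of Theorem (\ref{P1innTh}), whose statement concerns integration over all of $\mathbb{R}^{\N}$ rather than over $K$.
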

\begin{proof}
By substituting $W_{\psi}^{\bm\alpha,\lambda}{f}(\bm{a},\cdot)$ for $f(\cdot)$ in equation (\ref{P1eqn2LUP}), we obtain

\begin{eqnarray*}
\int_{K}|\mathfrak{F}_{\bm\alpha,\lambda}((W_{\psi}^{\bm\alpha,\lambda}{f})(\bm{a},\bm{b}))(\boldsymbol\eta)|^2d\boldsymbol\eta \leq |c(\bm\alpha_\lambda)|^2 A_\p\frac{|\sin\bm\alpha|_m}{(\lambda^2)^{\N}}(\lambda(K))^{\frac{2\p}{\N}}\|\|\bm{b}\|^\p (W_{\psi}^{\bm\alpha,\lambda}{f})(\bm{a},\bm{b})\|^2_{L^2(\mathbb{R}^{\N})}.
\end{eqnarray*}
Now we integrate either side of above equation with respect to  $\frac{d\bm{a}}{|\bm{a}|_m^2},$ we get
\begin{eqnarray*}
\int_{\mathbb{R}_0^{\N}}\int_{K}|\mathfrak{F}_{\bm\alpha,\lambda}((W_{\psi}^{\bm\alpha,\lambda}{f})(\bm{a},\bm{b}))(\boldsymbol\eta)|^2\frac{d\bm{a}}{|\bm{a}|_m^2}d\boldsymbol\eta\leq|c(\bm\alpha_\lambda)|^2 A_\p\frac{|\sin\bm\alpha|_m}{(\lambda^2)^{\N}}(\lambda(K))^{\frac{2\p}{\N}}\int_{\mathbb{R}_0^{\N}}\|\|\bm{b}\|^\p (W_{\psi}^{\bm\alpha,\lambda}{f})(\bm{a},\bm{b})\|^2_{L^2(\mathbb{R}^{\N})}\frac{d\bm{a}}{|\bm{a}|_m^2},
\end{eqnarray*}
and using theorem (\ref{P1innTh}),
\begin{eqnarray*}
{C_{\bm\alpha,\lambda}}\int_{K}|(\mathfrak{F}_{\bm\alpha,\lambda}f)|^2d\boldsymbol\eta\leq |c(\bm\alpha_\lambda)|^2 A_\p \frac{|\sin\bm\alpha|_m}{(\lambda^2)^{\N}}(\lambda(K))^{\frac{2\p}{\N}}\int_{\mathbb{R}_0^{\N}}\int_{\mathbb{R}^{\N}}\left|\|\bm{b}\|^\p (W_{\psi}^{\bm\alpha,\lambda}{f})(\bm{a},\bm{b})\right|^2d\bm{b}\frac{d\bm{a}}{|\bm{a}|_m^2}.
\end{eqnarray*}
This gives
\begin{eqnarray*}
\int_{K}|(\mathfrak{F}_{\bm\alpha,\lambda}f)|^2d\boldsymbol\eta\leq \frac{|c(\bm\alpha_\lambda)|^2}{C_{\bm\alpha,\lambda}} A_\p \frac{|\sin\bm\alpha|_m}{(\lambda^2)^{\N}}(\lambda(K))^{\frac{2\p}{\N}}\int_{\mathbb{R}_0^{\N}}\int_{\mathbb{R}^{\N}}\|\bm{b}\|^{2\p} \left|(W_{\psi}^{\bm\alpha,\lambda}{f})(\bm{a},\bm{b})\right|^2d\bm{b}\frac{d\bm{a}}{|\bm{a}|_m^2}.
\end{eqnarray*}
This concludes the proof.
\end{proof}
The following theorem expresses the local uncertainty inequality for MFrWT when $\p \textgreater \frac{\N}{2}.$
\begin{theorem}
(Local uncertainty inequality for MFrWT ) Assume $\psi$ is an admissible wavelet and $\p$ is such that $\p \textgreater \frac{\N}{2}$. Then,  for every measurable subset $K$ of $\mathbb{R}^{\N}$, there exist a constant $A_\p$ and for all $f\in L^2(\mathbb{R}^{\N})$ such that
\begin{eqnarray*}
\int_K|(\mathfrak{F}_{\bm\alpha,\lambda}f)(\boldsymbol\eta)|^2d\boldsymbol\eta\leq\frac{|c(\bm\alpha_\lambda)|^2}{C_{\bm\alpha,\lambda}}A_\p\frac{|\sin\bm\alpha|_m}{(\lambda^2)^{\N}}\lambda(K)\int_{\mathbb{R}_0^{\N}}\|(W_{\psi}^{\bm\alpha,\lambda}{f})(\bm{a},\cdot)\|^{2-\frac{\N}{\p}}_{L^2(\mathbb{R}^{\N})}\|\|\cdot\|^\p(W_{\psi}^{\bm\alpha,\lambda}{f})(\bm{a},\cdot)\|_{L^2(\mathbb{R}^{\N})}^{\frac{\N}{\p}}\frac{d\bm{a}}{|\bm{a}|_m^2}.
\end{eqnarray*}
\end{theorem}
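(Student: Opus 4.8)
The plan is to run, almost verbatim, the argument used for Theorem~\ref{P1Th1LUP}, but starting from the local uncertainty inequality (\ref{P1eqn3LUP}) for the MFrFT (the $\p>\frac{\N}{2}$ case) in place of (\ref{P1eqn2LUP}). First I would fix $\bm{a}\in\mathbb{R}_0^{\N}$ and substitute in (\ref{P1eqn3LUP}) the function $\bm{b}\mapsto(W_{\psi}^{\bm\alpha,\lambda}f)(\bm{a},\bm{b})$ for $f$; this is legitimate because, by the inner product relation (Theorem~\ref{P1innTh}) with $f=g$, the slice $(W_{\psi}^{\bm\alpha,\lambda}f)(\bm{a},\cdot)$ lies in $L^2(\mathbb{R}^{\N})$ for a.e.\ $\bm{a}$. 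This produces, for each such $\bm{a}$, a pointwise-in-$\bm{a}$ inequality whose left side is $\int_K|\mathfrak{F}_{\bm\alpha,\lambda}((W_{\psi}^{\bm\alpha,\lambda}f)(\bm{a},\cdot))(\boldsymbol\eta)|^2\,d\boldsymbol\eta$ and whose right side is $|c(\bm\alpha_\lambda)|^2A_\p\frac{|\sin\bm\alpha|_m}{(\lambda^2)^{\N}}\lambda(K)\,\|(W_{\psi}^{\bm\alpha,\lambda}f)(\bm{a},\cdot)\|_{L^2(\mathbb{R}^{\N})}^{2-\frac{\N}{\p}}\,\|\|\cdot\|^{\p}(W_{\psi}^{\bm\alpha,\lambda}f)(\bm{a},\cdot)\|_{L^2(\mathbb{R}^{\N})}^{\frac{\N}{\p}}$.

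Next I would integrate both sides against $\frac{d\bm{a}}{|\bm{a}|_m^2}$ over $\mathbb{R}_0^{\N}$; since both sides are nonnegative and measurable in $\bm{a}$ the inequality is preserved, and on the left Tonelli's theorem lets me interchange $\int_K$ with $\int_{\mathbb{R}_0^{\N}}$. To evaluate the left side I would invoke Lemma~\ref{P1lemma1}, which gives $\mathfrak{F}_{\bm\alpha,\lambda}((W_{\psi}^{\bm\alpha,\lambda}f)(\bm{a},\cdot))(\boldsymbol\eta)=\sqrt{|\bm{a}|_m}(\sqrt{2\pi})^{\N}\frac{c(\bm\alpha_\lambda)}{\overline{c(\bm\alpha_\lambda)}}e_{\bm\alpha,\lambda^2}(\bm{a}\boldsymbol\eta)(\mathfrak{F}_{\bm\alpha,\lambda}f)(\boldsymbol\eta)\overline{(\mathfrak{F}_{\bm\alpha,\lambda}\psi)(\bm{a}\boldsymbol\eta)}$, so its modulus squared is $|\bm{a}|_m(2\pi)^{\N}|(\mathfrak{F}_{\bm\alpha,\lambda}f)(\boldsymbol\eta)|^2|(\mathfrak{F}_{\bm\alpha,\lambda}\psi)(\bm{a}\boldsymbol\eta)|^2$, using $|c(\bm\alpha_\lambda)/\overline{c(\bm\alpha_\lambda)}|=1$ and $|e_{\bm\alpha,\lambda^2}|=1$. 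Then $\int_{\mathbb{R}_0^{\N}}|\bm{a}|_m(2\pi)^{\N}|(\mathfrak{F}_{\bm\alpha,\lambda}\psi)(\bm{a}\boldsymbol\eta)|^2\,\frac{d\bm{a}}{|\bm{a}|_m^2}=C_{\bm\alpha,\lambda}$ after the substitution $\bm{u}=\bm{a}\boldsymbol\eta$ together with the definition (\ref{P1eqn11}), since $\frac{d\bm{a}}{|\bm{a}|_m}$ is invariant under coordinatewise scaling; this is exactly the computation underlying Theorem~\ref{P1innTh} and the proof of Theorem~\ref{P1Th1LUP}. Hence the left side collapses to $C_{\bm\alpha,\lambda}\int_K|(\mathfrak{F}_{\bm\alpha,\lambda}f)(\boldsymbol\eta)|^2\,d\boldsymbol\eta$.

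On the right side I would simply factor out the constant $|c(\bm\alpha_\lambda)|^2A_\p\frac{|\sin\bm\alpha|_m}{(\lambda^2)^{\N}}\lambda(K)$, leaving the integral $\int_{\mathbb{R}_0^{\N}}\|(W_{\psi}^{\bm\alpha,\lambda}f)(\bm{a},\cdot)\|_{L^2(\mathbb{R}^{\N})}^{2-\frac{\N}{\p}}\|\|\cdot\|^{\p}(W_{\psi}^{\bm\alpha,\lambda}f)(\bm{a},\cdot)\|_{L^2(\mathbb{R}^{\N})}^{\frac{\N}{\p}}\frac{d\bm{a}}{|\bm{a}|_m^2}$ untouched; dividing through by $C_{\bm\alpha,\lambda}>0$ then yields the asserted inequality.

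The one point that differs substantively from the $0<\p<\frac{\N}{2}$ case, and which I would flag as the main obstacle, is that the right side of (\ref{P1eqn3LUP}) is not a single quadratic integral in $W_{\psi}^{\bm\alpha,\lambda}f$ but a product of $L^2$-norms raised to the fractional powers $2-\frac{\N}{\p}$ and $\frac{\N}{\p}$. Consequently, unlike the purely quadratic quantity $\|\|\cdot\|^{\p}(W_{\psi}^{\bm\alpha,\lambda}f)(\bm{a},\cdot)\|_{L^2}^{2}$ appearing in Theorem~\ref{P1Th1LUP}, this product cannot be ``linearized'' and the $\bm{a}$-integration cannot be carried out by Plancherel or the inner product relation; it must remain as an explicit integral over $\bm{a}$ in the statement. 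One could, if one wished, bound it further by Hölder's inequality in $\bm{a}$ with the conjugate exponents $\frac{2\p}{2\p-\N}$ and $\frac{2\p}{\N}$, but that would require additional integrability of the two factors and is not needed here. Apart from this, the proof is routine given Lemma~\ref{P1lemma1} and Theorem~\ref{P1innTh}.
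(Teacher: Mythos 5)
Your proposal is correct and follows exactly the route the paper intends: the paper's own proof of this theorem is the one-line remark that one repeats the argument of Theorem~\ref{P1Th1LUP} starting from inequality (\ref{P1eqn3LUP}), which is precisely what you do, and your observation that the fractional powers $2-\frac{\N}{\p}$ and $\frac{\N}{\p}$ prevent the $\bm{a}$-integral on the right from being collapsed via Theorem~\ref{P1innTh} correctly explains why the statement retains an explicit integral over $\bm{a}$.
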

\begin{proof}
Using equation (\ref{P1eqn3LUP}), the proof follows similar to that of theorem {\ref{P1Th1LUP}}.
\end{proof}
\section {Conclusions}
We offer a novel definition of the wavelet transformation of a function specified in $\mathbb{R}^{\N}$, i.e. MFrWT, and investigate some of its fundamental aspects, such as linearity, anti-linearity, parity, conjugation, and so on, in this study. For the image space of the suggested transformation, we established the inner product relation and inversion formula for MFrWT with reproducible kernel function. Finally, Heisenberg's uncertainty inequality and the logarithmic uncertainty principle are obtained from the relationship between classical FT and MFrFT. The commonalities between different transform domains can be better understood using logarithmic, Heisenberg, and local uncertainty principles. The three uncertainty principles can be useful in the future because they are features of signal processing.
\section {Acknowledgement}
The work is partly supported by Council of Scientific and Industrial Research (CSIR), New Delhi, India (File No. 09/1023(0035)/2020-EMR-I) and UGC File No. 16-9 (June 2017)/2018(NET/CSIR), New Delhi, India.
\bibliography{MasterNavneet}
\bibliographystyle{plain}
\end{document}